\newcommand{\Ueberschrift} {The \'etale topos reconstructs varieties over sub-$p$-adic fields}
\newcommand{\Kurztitel}{The \'etale topos reconstructs varieties over sub-$p$-adic fields}
\definecolor{love}{RGB}{128, 15, 37}
\definecolor{slate}{RGB}{42, 54, 59}
\numberwithin{equation}{section} 
\theoremstyle{definition}
\newtheorem{definition}[equation]{Definition}
\newtheorem{recollection}[equation]{Recollection}
\newtheorem{remark}[equation]{Remark}
\newtheorem*{slogan*}{Slogan}
\theoremstyle{plain}
\newtheorem{corollary}[equation]{Corollary}
\newtheorem{lemma}[equation]{Lemma}
\newtheorem{proposition}[equation]{Proposition}
\newtheorem{theorem}[equation]{Theorem}
\newtheorem{thmABC}{Theorem}
\newtheorem{propABC}[thmABC]{Proposition}
\crefname{section}{\S\!}{\S\S\!}
\Crefname{section}{Section}{Sections}
\crefname{subsection}{\S\!}{\S\S\!}
\Crefname{subsection}{Subsection}{Subsections}
\crefname{nul}{}{}
\Crefname{nul}{}{}
\crefname{axiom}{Axiom}{Axioms}
\Crefname{axiom}{Axiom}{Axioms}
\crefname{convention}{Convention}{Conventions}
\Crefname{convention}{Convention}{Conventions}
\crefname{conjecture}{Conjecture}{Conjectures}
\Crefname{conjecture}{Conjecture}{Conjectures}
\crefname{construction}{Construction}{Constructions}
\Crefname{construction}{Construction}{Constructions}
\crefname{corollary}{Corollary}{Corollaries}
\Crefname{corollary}{Corollary}{Corollaries}
\crefname{counterexample}{Counterexample}{Counterexamples}
\Crefname{counterexample}{Counterexample}{Counterexamples}
\crefname{definition}{Definition}{Definitions}
\Crefname{definition}{Definition}{Definitions}
\crefname{example}{Example}{Examples}
\Crefname{example}{Example}{Examples}
\crefname{exercise}{Exercise}{Exercises}
\Crefname{exercise}{Exercise}{Exercises}
\crefname{lemma}{Lemma}{Lemmas} 
\Crefname{lemma}{Lemma}{Lemmas} 
\crefname{notation}{Notation}{Notations}
\Crefname{notation}{Notation}{Notations}
\crefname{observation}{Observation}{Observations}
\Crefname{observation}{Observation}{Observations}
\crefname{proposition}{Proposition}{Propositions}
\Crefname{proposition}{Proposition}{Propositions}
\crefname{question}{Question}{Questions}
\Crefname{question}{Question}{Questions}
\crefname{recollection}{Recollection}{Recollections}
\Crefname{recollection}{Recollection}{Recollections}
\crefname{remark}{Remark}{Remarks}
\Crefname{remark}{Remark}{Remarks}
\crefname{subexample}{Subexample}{Subexamples}
\Crefname{subexample}{Subexample}{Subexamples}
\crefname{theorem}{Theorem}{Theorems}
\Crefname{theorem}{Theorem}{Theorems}
\crefname{variant}{Variant}{Variants}
\Crefname{variant}{Variant}{Variants}
\crefname{warning}{Warning}{Warnings}
\Crefname{warning}{Warning}{Warnings}
\setlist[enumerate]{
     itemsep  = 0.2cm,
       label  = {\upshape (\arabic*)},
         ref  = \arabic*,
    leftmargin  = *
}
\newcommand{\enumref}[2]{{\upshape (\hyperref[#1.#2]{\ref*{#1}.\ref*{#1.#2}})}}
\newcommand{\colonequals}{\coloneq}
\renewcommand{\setminus}{\smallsetminus}
\DeclareMathOperator{\Hom}{Hom}
\DeclareMathOperator*{\colim}{colim}
\DeclareMathOperator{\pr}{pr}
\DeclareMathOperator{\Gal}{Gal}
\DeclareMathOperator{\Pic}{Pic}
\DeclareMathOperator{\Spec}{Spec}
\DeclareMathOperator{\ft}{ft}
\DeclareMathOperator{\et}{\acute et}
\DeclareMathOperator{\fet}{f\acute et}
\newcommand{\categ}[1]{\textbf{\textup{#1}}}
\newcommand{\Sch}{\categ{Sch}}
\newcommand{\Set}{\categ{Set}}
\newcommand{\Scal}{\mathcal{S}}
\newcommand{\Xcal}{\mathcal{X}}
\newcommand{\Ycal}{\mathcal{Y}}
\renewcommand{\epsilon}{\varepsilon}
\DeclareMathOperator{\rH}{H}
\newcommand{\bA}{{\mathbb A}}
\newcommand{\bP}{{\mathbb P}}
\newcommand{\bQ}{{\mathbb Q}}
\newcommand{\bZ}{{\mathbb Z}}
\newcommand{\cS}{{\mathscr S}}
\newcommand{\dO}{{\mathcal O}}
\newcommand{\ab}{{\rm ab}}
\DeclareMathOperator{\im}{im}
\DeclareMathOperator{\Jac}{Jac}
\newcommand{\ph}{\varphi}
\newcommand{\ep}{\varepsilon}
\newcommand{\defemph}[1]{\textbf{#1}}
\newcommand{\isomto}{\xlongrightarrow{\,\smash{\raisebox{-0.65ex}{\ensuremath{\displaystyle\sim}}}\,}}
\def\tbp{{\overrightarrow{01}}}
\newcommand{\Xet}{X_{\et}}
\newcommand{\Yet}{Y_{\et}}
\DeclareMathOperator{\pin}{pin}
\newcommand{\Hompin}{\Hom^{\pin}}
\newcommand{\Schft}{\Sch^{\ft}}
\DeclareMathOperator{\sn}{sn}
\newcommand{\Schsn}{\Sch^{\sn}}
\newcommand{\RTop}{\categ{RTop}}
\newcommand{\RToppin}{\RTop^{\pin}}
\DeclareMathOperator{\UH}{UH}
\begin{document}

\title[\Kurztitel]{\Ueberschrift} 

\author{Magnus Carlson}
\address{Magnus Carlson, Institut f\"ur Mathematik, Goethe--Universit\"at Frankfurt, Robert-Mayer-Stra\ss e~6--8,
60325~Frankfurt am Main, Germany}
\email{\tt carlson@math.uni-frankfurt.de}

\author{Jakob Stix}
\address{Jakob Stix, Institut f\"ur Mathematik, Goethe--Universit\"at Frankfurt, Robert-Mayer-Stra\ss e~6--8,
60325~Frankfurt am Main, Germany}
\email{\tt stix@math.uni-frankfurt.de}


\date{\normalsize \today}

\begin{abstract} 
Let $K$ be a sub-$p$-adic field. We show that the functor sending a finite type $K$-scheme to its 
\'etale topos is fully faithful after localizing at the class of universal homeomorphisms. This 
generalizes a result of Voevodsky, who proved the analogous theorem for fields 
finitely generated over $\mathbb{Q}$. Our proof relies on Mochizuki's $\Hom$-theorem 
in anabelian geometry, and a study of point-theoretic morphisms of fundamental groups 
of curves.
\end{abstract}

\maketitle
\section{Introduction}
\label{sec:intro}
In his 1983 letter to Faltings \cite{MR1483108}, 
Grothendieck conjectured as one of his anabelian conjectures that,
 for fields $K$ finitely generated over their prime field, \emph{any} scheme $X$ of finite type 
over $K$ can be reconstructed, up to universal homeomorphism, from its \'etale topos $X_{\et}$. 
Voevodsky proved in  \cite{MR1098621} that Grothendieck's conjecture is true when $X$ is normal and $K$ is finitely generated of characteristic 
zero, and Carlson--Haine--Wolf \cite{CarlsonEtaleTopoi}, building on techniques of Voevodsky,
proved Grothendieck's conjecture in characteristic zero, and also proved the conjecture in the case when $K$ is a finitely generated field of positive transcendence degree and positive characteristic. 
In this paper, we show that Grothendieck's anabelian conjecture for \'etale topoi holds whenever $K$ is a sub-$p$-adic field, i.e., $K$ is  a subfield of a finitely generated extension of $\mathbb{Q}_p$. This reproves Voevodsky's result. 

We now state our results more precisely.  Let $ \Schft_{K}$ be the category of schemes of  finite type over $K$, and let $\Schft_K[\UH^{-1}]$ be the localization along the universal homeomorphims.  
Write $ \smash{ \RToppin_{K} }$  for the category of topoi over the  \'etale topos $ \Spec(K)_{\et} $ and  
 \defemph{pinned} geometric 
morphisms, i.e., morphisms such that the induced map 
$|\Xcal| \to |\Ycal|$ on topological spaces takes closed points to closed points. 

\begin{thmABC}[see \cref{thm:etale_reconstruction}] \label{intro_thm:etale_reconstruction}
	Let $ K $ be a sub-$p$-adic field.
	Then the functor
	\begin{equation*}
		(-)_{\et} \colon \Schft_{K}[\UH^{-1}] \longrightarrow \RToppin_{K}
	\end{equation*}
	sending a scheme $X \to \Spec(K)$ of finite type to its \'etale topos $\Xet \to \Spec(K)_{\et}$ is fully faithful.
\end{thmABC}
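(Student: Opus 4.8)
The plan is to separate full faithfulness into faithfulness, which is elementary, and fullness, into which all of the anabelian input is concentrated. A sub-$p$-adic field has characteristic zero, so every finite type $K$-scheme is generically smooth, there are no purely inseparable subtleties, and inverting universal homeomorphisms amounts to passing to seminormalizations. For faithfulness, replace $X$ by $X_{\mathrm{red}}$ and cover $Y$ by affines to assume $X$ reduced and $Y$ separated. Given $f,g\colon X\to Y$ with $f_{\et}\simeq g_{\et}$, the point of the topos $X_{\et}$ attached to a geometric point $\bar\eta$ over a generic point $\eta$ of $X$ is carried by $f_{\et}$ to the geometric point $f\circ\bar\eta$ of $Y$, and in characteristic zero this determines $f|_{\Spec\kappa(\eta)}$ exactly; hence $f$ and $g$ agree on every generic point of $X$, and as their equalizer is then a closed subscheme of the reduced scheme $X$ meeting each component in its generic point, it is all of $X$, so $f=g$.

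\textbf{Reductions for fullness.} Let $\phi\colon X_{\et}\to Y_{\et}$ be a pinned geometric morphism over $\Spec(K)_{\et}$. Decomposing $X$ into components and shrinking to a dense smooth open, I would reduce to $X$ smooth and geometrically connected; the morphism on this open will later be promoted to a morphism on all of $X$ using the values of $\phi$ on the remaining closed points, which a pinned topos morphism supplies. On the target, replacing $Y$ by the closure of the image of $\phi$ and then applying standard geometry — embedding $Y$ projectively and passing to Lefschetz pencils of hyperplane sections — I would reduce to the case that $Y$ is a hyperbolic curve over $K$, a morphism into a higher-dimensional $Y$ being assembled from its compositions with a point-separating family of dominant maps to such curves.

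\textbf{The anabelian core.} With $Y$ a hyperbolic curve, $\phi$ induces a homomorphism $\rho\colon\pi_1(X)\to\pi_1(Y)$ over $G_K$, well defined up to conjugacy, and the crux is to realize $\rho$ (equivalently $\phi$) by a morphism of schemes. When $\rho$ has open image modulo $\pi_1$ of the geometric fibre, Mochizuki's relative $\Hom$-theorem over sub-$p$-adic fields produces a unique dominant $K$-morphism $X\to Y$ inducing it. The delicate case is that of \emph{point-theoretic} $\rho$, whose image is absorbed into a decomposition subgroup of a point of $Y$: here the section conjecture — which would locate that point — is unavailable over sub-$p$-adic fields, so one must instead exploit that $\phi$ is pinned, carrying the closed points of $X$ into those of $Y$ rigidly enough to single out the target point and identify $\phi$ with a constant morphism. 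That $\rho$ always falls into one of these two classes is likewise forced by pinnedness, by restricting $\rho$ along $\pi_1(C)\to\pi_1(X)$ for curves $C\subseteq X$ and using that a morphism of curves is dominant or constant. I expect this study of point-theoretic morphisms of fundamental groups of curves to be the main obstacle.

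\textbf{Conclusion.} The $K$-morphism $\psi\colon X\to Y$ so produced induces $\rho$ on $\pi_1$, and $\psi_{\et}\simeq\phi$ follows by comparing the two pinned geometric morphisms on the generic points of $X$, where the assertion reduces to injectivity of the passage from geometric morphisms of a point into $Y_{\et}$ to conjugacy classes of homomorphisms into $\pi_1(Y)$ — valid because a hyperbolic curve is an \'etale $K(\pi,1)$. Undoing the reductions (promoting the map on the dense open to all of $X$, reassembling the curve-valued maps, and passing from components back to $X$) then yields the desired morphism in $\Schft_K[\UH^{-1}]$, which proves that $(-)_{\et}$ is fully faithful.
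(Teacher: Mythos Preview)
Your identification of the anabelian core is on target and matches the paper closely: the dichotomy that a $\Gal_K$-homomorphism between fundamental groups of hyperbolic curves compatible with closed points is either open or lands in a single decomposition group (your ``main obstacle'') is exactly the paper's Proposition~C, proved there by a Creutz--Voloch style argument through generalized Jacobians over Kummer-faithful fields. The open case is then handled by Mochizuki's Hom-theorem and the constant case by reading off the target point from $\varphi(\overline K)$, just as you outline; restricting along curves $C\to X$ to globalize the dichotomy is also what the paper does.

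The genuine gap is in your reduction on the target side. You propose to embed $Y$ projectively and use Lefschetz pencils to obtain ``a point-separating family of dominant maps to [hyperbolic] curves'', but a higher-dimensional $Y$ generally admits \emph{no} dominant morphism to any curve whatsoever --- take $Y=\mathbb{P}^2_K$, or any simply connected variety --- and a Lefschetz pencil yields only a rational map to $\mathbb{P}^1$, not a morphism to a hyperbolic curve. So this step cannot work as written. The paper bypasses it entirely by invoking the machinery of Carlson--Haine--Wolf, whose reduction theorem already packages faithfulness, the passage to seminormal $X$, and the reduction of the target to $\mathbb{A}^1_K$: one is left to show only that every pinned $\varphi\colon X_{\et}\to\mathbb{A}^1_{K,\et}$ with $X$ smooth connected agrees on $\overline K$-points with an actual $K$-morphism $X\to\mathbb{A}^1_K$. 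From $\mathbb{A}^1_K$ one then passes to $\mathbb{P}^1_K\setminus\{0,1,\infty\}$ by a simple two-chart Zariski cover. This external input also absorbs your separate faithfulness argument and the unexplained ``promotion from a dense open using values at the remaining closed points'', which as stated does not obviously produce a morphism of schemes.
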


For any two seminormal schemes $X$ and $Y$ of finite type over a field $K$ of characteristic zero, any universal homeomorphism is in fact an isomorphism.
If we denote by $\Schsn_K$ the category of seminormal schemes of finite type over $K$, the inclusion 
$\Schsn_K \rightarrow \Sch_K$ admits a right adjoint, which induces an equivalence $\Schft_K[\UH^{-1}] \rightarrow \Schsn_K$, see  \cite[Theorem 1.13]{CarlsonEtaleTopoi}. Thus, Theorem \ref{intro_thm:etale_reconstruction} can be translated as follows ($\Hompin_K$ denotes the Hom-groupoid  in $\RToppin_K$, see \cref{sec:topoi}). 

\begin{thmABC} \label{intro_thm:etale_reconstructionB}
Let $K$ be a sub-$p$-adic field. Let $X$ and $Y$ be schemes of finite type over $K$ and assume that $X$ is seminormal. Then the natural map 
\[
(-)_{\et} \colon \Hom_K(X,Y)  \longrightarrow \Hompin_K(X_{\et},Y_{\et})
\]
is an equivalence of groupoids.
\end{thmABC}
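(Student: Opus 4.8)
The plan is to bootstrap Theorem~\ref{intro_thm:etale_reconstructionB} from Mochizuki's $\Hom$-theorem for hyperbolic curves over sub-$p$-adic fields by a chain of reductions, following the overall strategy of Voevodsky and of Carlson--Haine--Wolf; the one genuinely new ingredient will be a classification of \emph{point-theoretic} homomorphisms between \'etale fundamental groups of hyperbolic curves over $K$, and proving this classification is where I expect the main difficulty to lie.

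First I would dispose of faithfulness and of the zero-dimensional case. If $f,g\colon X\to Y$ induce isomorphic pinned geometric morphisms over $\Spec(K)_{\et}$, then restricting to a smooth curve $Z\subseteq X$ through a general closed point and composing with the coordinate maps $Y\hookrightarrow\bA^n\to\bA^1$ reduces their equality to the injectivity of $(-)_{\et}$ on $\Hom_K(Z,\bA^1)$, which follows from Mochizuki's theorem by covering $\bA^1$ with hyperbolic opens; since $X$ is reduced and $Y$ separated, this gives $f=g$. For the zero-dimensional case, a pinned geometric morphism $\Spec(\ell)_{\et}\to Y_{\et}$ over $\Spec(K)_{\et}$ with $\ell/K$ finite has image a closed point $y\in Y$, hence factors through $\Spec(\kappa(y))_{\et}$, and over $\Spec(K)_{\et}$ the induced homomorphism of absolute Galois groups forces an embedding $\kappa(y)\hookrightarrow\ell$, so it comes from a unique $K$-morphism.

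For fullness, let $\varphi\colon X_{\et}\to Y_{\et}$ be a pinned geometric morphism over $\Spec(K)_{\et}$ with $X$ seminormal. Replacing $Y$ by $Y_{\mathrm{red}}$, writing $Y$ as a union of affine charts $Y'\hookrightarrow\bA^n$, composing $\varphi$ with the coordinate functions $Y'\to\bA^1$, and covering $\bA^1$ by two hyperbolic opens of the form $\bA^1\setminus\{a,b\}$, one reduces -- after reconstructing a morphism $X'\to\bA^1$ for each coordinate, assembling $X'\to\bA^n$ by the universal property of affine space, checking on the Zariski-dense set of closed points that its image lies in $Y'$ (using the zero-dimensional case pointwise), and gluing over $X$ -- to the case where the target is a hyperbolic curve $C$. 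On the source side, Noetherian induction -- building the morphism stratum by stratum, seminormality of $X$ guaranteeing that the result is a genuine morphism on $X$ -- reduces to $X$ smooth over $K$, and a Lefschetz-type argument (surjectivity on $\pi_1$ of a general smooth curve section) then reduces the source to a hyperbolic curve $Z$, the morphism on $X$ being recovered afterwards from its restrictions to a covering family of such curves.

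We are thus reduced to classifying the pinned geometric morphisms $\varphi\colon Z_{\et}\to C_{\et}$ over $\Spec(K)_{\et}$ for hyperbolic curves $Z,C$. On underlying topological spaces, $\varphi$ sends the generic point of $Z$ either to a closed point $c_{0}$ of $C$ or to the generic point of $C$; in the first case, since $|C|$ has no other points and $\overline{\{c_{0}\}}=\{c_{0}\}$, the map of spaces is constant, $\varphi$ factors through the closed point $c_{0}$, and being over $\Spec(K)_{\et}$ this forces $\kappa(c_{0})$ to lie in the field of constants of $Z$, so $\varphi$ is induced by the constant $K$-morphism $Z\to c_{0}\hookrightarrow C$. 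In the second, \emph{dominant}, case $\varphi$ induces a continuous homomorphism $\psi\colon\pi_1(Z)\to\pi_1(C)$ over $G_{K}:=\Gal(\overline K/K)$ which is \emph{point-theoretic}: it carries the decomposition group of every closed point of $Z$ into the decomposition group of a closed point of $C$ -- this is exactly the translation of pinnedness. The heart of the matter, and the step I expect to be the main obstacle, is to show that such a dominant point-theoretic $\psi$ must be \emph{open}: a priori a geometric morphism of topoi can be far wilder than a scheme morphism, and one must exclude the possibility that the image of $\psi$ in the geometric fundamental group $\pi_1(C_{\overline K})$ has infinite index. Here the study of point-theoretic $\pi_1$-morphisms enters: one analyses the constraints imposed on $\psi$ by the decomposition-group images $\psi(D_{z})$ as $z$ runs over the closed points of $Z$, and uses the structure of closed subgroups of $\pi_1(C_{\overline K})$ together with the ``$p$-adic'', and in particular highly non-free, nature of the decomposition groups of closed points of a curve over a sub-$p$-adic field to conclude that an infinite-index image would force $\psi$ to be constant, contradicting dominance. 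Once $\psi$ is known to be open, Mochizuki's $\Hom$-theorem furnishes a dominant $K$-morphism $Z\to C$ inducing $\psi$ up to conjugacy, hence inducing $\varphi$ by the faithfulness established above; running all the reductions in reverse then produces the $K$-morphism $X\to Y$ inducing $\varphi$, which proves Theorem~\ref{intro_thm:etale_reconstructionB}.
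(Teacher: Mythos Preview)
Your overall reduction strategy---pass to a hyperbolic curve target, invoke Mochizuki's $\Hom$-theorem when the induced $\pi_1$-map is open, and show the remaining case is constant---is correct and matches the paper. The genuine gap is exactly at the step you flag as the main obstacle: proving that a point-theoretic $\psi\colon\pi_1(Z)\to\pi_1(C)$ between hyperbolic curves is either open or has image in a single decomposition group. Your sketch (``structure of closed subgroups of $\pi_1(C_{\overline K})$'' and the ``non-free nature of decomposition groups'') does not point to a workable argument, and I do not see how to make it precise. The paper's proof uses a completely different, geometric mechanism that your plan is missing: via Kummer-faithfulness of $K$ and the Abel--Jacobi map, a point-theoretic $\psi$ induces an honest \emph{group homomorphism} $\Jac^\bullet_U(\overline K)\to\Jac^\bullet_V(\overline K)$ on $\overline K$-points of generalized Jacobians; a dimension count \`a la Creutz--Voloch, applied after passing to finite \'etale covers of $V$ so as to make $\chi_V$ arbitrarily negative, forces the image to lie in a proper algebraic subgroup and hence $\psi(\overline K)$ to be finite; divisibility of $\Jac^0_U(\overline K)$ then forces $\psi(\overline K)$ to be a single point; finally a normalizer computation (using Mochizuki's result that decomposition subgroups are their own commensurators) pins the entire image of $\psi$ inside one decomposition group. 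None of these ingredients---the generalized Jacobian, Kummer-faithful injectivity, the Euler-characteristic/dimension estimate, divisibility---appear in your plan.

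Two smaller points. First, your dichotomy via where the generic point of $|Z|$ lands is not the paper's organizing principle and adds friction: the paper never argues ``dominant on spaces $\Rightarrow$ open on $\pi_1$''; it works directly with the map $\ph(\overline K)$ on $\overline K$-points (well-defined by pinnedness plus Kummer-faithfulness) and shows that \emph{not open} forces $\ph(\overline K)$ constant with $K$-rational value, whence $\ph$ comes from a constant morphism. Second, you need not reduce the \emph{source} to a curve before applying Mochizuki---his theorem already accepts an arbitrary smooth source against a hyperbolic curve target. The paper uses curves $g\colon C\to X$ only as probes in the non-open case (together with a Bertini-type connectedness lemma) to propagate constancy of $\ph(\overline K)$ from curves to all of $X(\overline K)$; your proposed Lefschetz-style $\pi_1$-surjectivity reduction and subsequent ``recovery from a covering family of curves'' is an unnecessary detour.
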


Following  \cite{CarlsonEtaleTopoi}, we know that $\Hompin_K(X_{\et},Y_{\et})$ is equivalent to a set, and moreover, we reduce the proof of \cref{intro_thm:etale_reconstructionB} to showing that the natural map 
\[
\Hom_{K}(X,\mathbb{P}_K^1\setminus \{0,1,\infty\} )  \longrightarrow  \Hom_{K}^{\pin}(\Xet,(\mathbb{P}_K^1\setminus \{0,1,\infty\})_{\et})
\]
is bijective whenever $X$ is a smooth, connected, affine $K$-scheme.  The main task is to show that a pinned morphism is either constant, or induces an open map on \'etale fundamental groups. In the latter case, we use Mochizuki's Hom-theorem to conclude \cite[Theorem A]{MochizukiHom}. Generalizing an argument of Creutz and Voloch \cite[Theorem 1.5]{CreutzVolochBrauer},
we establish the dichotomy by proving the following. 

\begin{propABC}[see \cref{prop:hyperbolicopenpi1}] \label{prop:introhyperbolicopenpi1}
        Let $X$ and $Y$ be geometrically connected hyperbolic curves over a Kummer-faithful field $K$.
    	Suppose $f \colon   \pi_1(X) \rightarrow \pi_1(Y)$ is a point-theoretic map over $\Gal_K$. Then either $f$ is open,
    	or the image is contained in a single decomposition group.
\end{propABC}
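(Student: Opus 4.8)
The plan is to reduce to a statement about geometric fundamental groups and then to feed the point-theoretic hypothesis through the arithmetic of decomposition groups over $K$, generalizing the argument of Creutz--Voloch. Write $\Delta_X := \pi_1(X_{\bar K})$ and $\Delta_Y := \pi_1(Y_{\bar K})$. Since $f$ lies over $\Gal_K$ it carries $\Delta_X$ into $\Delta_Y$, giving $\bar f\colon \Delta_X \to \Delta_Y$; and since $f(\pi_1(X))$ surjects onto $\Gal_K$, the morphism $f$ is open exactly when $\bar f$ has open image. So I assume $\bar f$ does \emph{not} have open image, and I must produce a point $y$ of $Y$ (possibly a cusp) with $\im(f) \subseteq D_y$. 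Two inputs will be used repeatedly: first, the point-theoretic hypothesis assigns to each closed point $x$ of $X$ a point $y(x)$ of $Y$ with $f(D_x) \subseteq D_{y(x)}$ up to conjugation, and comparing projections to $\Gal_K$ forces $k(y(x)) \subseteq k(x)$; second, by a Chebotarev-type density argument the decomposition groups of closed points control $\im(f)$, in the sense that $\im(f)$ is topologically generated by the subgroups $f(D_x)$.

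\emph{The point map and the Albanese.} The heart of the proof is to analyze the assignment $x \mapsto y(x)$ and to show that, when $\bar f$ is not open, it is forced to be constant (apart from the cuspidal case, where $\im(\bar f)$ is a cuspidal inertia subgroup). To detect constancy I would compose $f$ with $\pi_1(Y) \to \pi_1(\Jac_Y)$, where $\Jac_Y$ is the semiabelian Albanese of $Y$ over $K$, and use the Kummer-faithful hypothesis to control the resulting homomorphism $\pi_1(X) \to \pi_1(\Jac_Y)$ and the associated Kummer classes: the point-theoretic hypothesis then says that, under the Albanese map, the class of $y(x)$ is determined, and because $Y$ sits non-degeneratively inside $\Jac_Y$ (it is hyperbolic, so not contained in a translate of a proper sub-semiabelian variety), the locus swept out by the $y(x)$ is either a single point — in which case all $y(x)$ coincide, with $\bar f$ killing abelianizations — or all of $Y$, in which case I expect to be able to argue that $\bar f$ is already open, contradicting our assumption.

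\emph{From pointwise data to the image of $f$.} Granting that all $y(x)$ equal a single point $y$, it remains to upgrade this to $\im(f) \subseteq D_y$. After a conjugation one may assume $f(D_{x_0}) \subseteq D_y$ for one closed point $x_0$; then, using that $\im(f)$ is topologically generated by the $f(D_x)$, each contained in a $\pi_1(Y)$-conjugate of $D_y$, and that all these inclusions are compatible over $\Gal_K$ (so that the conjugating elements are controlled modulo the commutator subgroup of $\Delta_Y$, which $\bar f$ annihilates in this case), I would conclude that in fact a single conjugate of $D_y$ contains $\im(f)$. The case where $X$ has no small-degree closed points is handled by the usual descent to a finite extension.

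\emph{Main obstacle.} The real difficulty is precisely this last passage, together with the ``dominant $\Rightarrow$ open'' alternative in the second step: the kernel of $\pi_1(Y) \to \pi_1(\Jac_Y)$ is enormous, so the abelianized/pointwise analysis alone is very far from pinning $\im(f)$ inside one decomposition group, and indeed there are non-open closed subgroups of a surface group whose image in the abelianization is open. Closing this gap is exactly where the point-theoretic hypothesis — applied to the decomposition groups of \emph{all} closed points of $X$, over the Kummer-faithful field $K$ — must do genuine work, and it is the step where the generalization of Creutz--Voloch requires the most care. The remaining ingredients (restriction to $\Delta_X$, injectivity of Kummer maps and the relevant section-theoretic statements over Kummer-faithful fields, non-degeneracy of the Albanese map of a hyperbolic curve, density of decomposition groups) are comparatively standard.
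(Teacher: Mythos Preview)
Your outline has the right ingredients --- Albanese, Kummer-faithful injectivity, comparison of decomposition groups --- but it is missing the two ideas that actually make the argument close, and in one place it asserts something that is not available.

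\textbf{The constancy step.} You propose a dichotomy: the closure of $\{y(x)\}$ in $\Jac_Y$ is either a point or all of $Y$, and in the second case $\bar f$ should be open. Neither direction is justified. The paper does \emph{not} argue this way. Instead, it uses that if $\bar f$ is not open then $\im(f)$ lies in $\pi_1(V')$ for finite \'etale covers $V'\to V$ of arbitrarily large degree; by Riemann--Hurwitz $\chi_{V'}$ becomes arbitrarily negative, so $r_{V'}=\dim\Jac^0_{V'}$ becomes arbitrarily large relative to $r_U$. One then shows (via the group homomorphism $\psi^{\ab}\colon \Jac^\bullet_U(\bar K)\to \Jac^\bullet_{V'}(\bar K)$, which exists because $\psi$ is point-theoretic and $K$ is Kummer-faithful) that the image of $\psi^{\ab,0}$ lies in the image of the summation map $V'^{\,2r_U}\to \Jac^0_{V'}$, which is not dominant once $2r_U<r_{V'}$. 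Hence the Zariski closure of the image is a proper algebraic subgroup, whose intersection with $j(V')$ is finite. So $\psi(\bar K)$ has finite image; since it is also the image of the divisible group $\Jac^0_U(\bar K)$ under a homomorphism, it must be a single point. This tower-of-covers plus dimension-count is precisely the Creutz--Voloch input and is absent from your plan.

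\textbf{The passage from ``all $y(x)$ equal $y$'' to $\im(f)\subseteq D_y$.} Your claim that $\im(f)$ is topologically generated by the $f(D_x)$ by a ``Chebotarev-type density argument'' is not known over an arbitrary Kummer-faithful field, and the paper does not use it. What the paper does is: run the constancy argument again for every open $H\subseteq\pi_1(Y)$, so that the single image point lifts coherently through all finite \'etale covers of $Y$; this pins down one specific decomposition group $D_{\tilde v}$ (not just a conjugacy class) with $f(D_x)\subseteq D_{\tilde v}$ for all $x$. Let $\Delta\subseteq\pi_1(X)$ be the closed subgroup generated by all $D_x$; it is normal, so any $\gamma\in\pi_1(X)$ gives $f(\gamma)\in N_{\pi_1(Y)}(f(\Delta))$. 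Since $f(\Delta)$ is open in $D_{\tilde v}$, Mochizuki's theorem that decomposition groups in hyperbolic curves over Kummer-faithful fields are commensurably terminal forces $N_{\pi_1(Y)}(f(\Delta))\subseteq D_{\tilde v}$, hence $\im(f)\subseteq D_{\tilde v}$. Your proposed control of conjugating elements ``modulo the commutator subgroup of $\Delta_Y$'' does not give this: knowing that $\bar f$ kills $[\Delta_Y,\Delta_Y]$ says nothing about whether elements of $\im(f)$ normalize a given decomposition group.
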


\subsection*{Acknowledgments}
The first author thanks Eric Ahlqvist, Clark Barwick, Peter Haine, and Sebastian Wolf
for helpful discussions.
 He especially wishes to thank Peter Haine and Sebastian Wolf, from whom he
 learned a lot on the topic of  \'etale reconstruction, particularly through their joint work 
 \cite{CarlsonEtaleTopoi}. Both of the authors
 of this paper gratefully acknowledge support from the 
 Deutsche Forschungsgemeinschaft (DFG) through the Collaborative Research Centre
  TRR 326 ``Geometry and Arithmetic of Uniformized Structures'', project number  444845124.

\section{Preliminaries on topoi}  
\label{sec:topoi}
We recall some well-known results on \'etale topoi of schemes. Much of the material here is already recollected in \cite[Section 2]{CarlsonEtaleTopoi}, to which we refer the reader for more details. 

\subsection{Topoi and pinned morphisms}

Let $\RTop$ denote the $(2,1)$-category of topoi with geometric $1$-morphisms. We recall the relative notion of the $(2,1)$-category $\RTop_{\Scal}$ of topoi sliced over a particular topos $\Scal$:
\begin{enumerate}[(i)] 
	\item 
		Objects of $\RTop_{\Scal}$ are geometric $1$-morphisms of topoi $p_\ast \colon \Xcal \to \Scal$.
	\item 
		Given $p_\ast \colon  \Xcal \to \Scal$  and $q_\ast \colon \Ycal \to \Scal$, the Hom-groupoid  
		$\Hom_{\Scal}\big(p_\ast,q_\ast\big)$ has as object pairs $(\ph_\ast,\alpha)$ where 
		$\ph_\ast \colon \Xcal \to \Ycal$ is a geometric morphism and 
		$\alpha \colon {q_\ast \ph_\ast} \isomto {p_\ast}$ is a natural isomorphism. The morphisms 
		from $ (\ph_\ast,\alpha) \colon  \Xcal \to \Ycal$ to $(\psi_\ast,\beta) \colon \Xcal \to \Ycal$
		 are the natural isomorphisms $\vartheta \colon \ph_\ast \isomto \psi_\ast $ such that 
		\begin{equation*} 
			\begin{tikzcd}
			  & q_\ast \psi_\ast \arrow[dr," \beta"] \\
			 q_\ast \ph_\ast \arrow[rr, "\alpha"'] \arrow[ur,"q_\ast \vartheta"] & & p_\ast
			\end{tikzcd} 
		\end{equation*}
		commutes.
\end{enumerate}

To a topos $\Xcal$ one can functorially associate a topological space $|\Xcal|$, by considering 
the locale of subobjects of the terminal object \cite[\S2.4]{CarlsonEtaleTopoi}. 
Given a map $\ph \colon  \Xcal \rightarrow \Ycal$ of topoi, the natural
map $|\ph| \colon |\Xcal| \rightarrow |\Ycal|$ is continuous. Call a geometric morphism 
$ \ph \colon  \Xcal \rightarrow \Ycal$ \defemph{pinned} if the underlying map of topological spaces
takes closed points to closed points. Given topoi $ \Xcal,\Ycal \in \RTop_{\Scal} $, write
\begin{equation*}
    \Hompin_{\Scal}(\Xcal,\Ycal) \subseteq \Hom_{\Scal}(\Xcal,\Ycal)
\end{equation*}
for the full subgroupoid spanned by the pinned geometric morphisms. Define
$\smash{ \RToppin_{\Scal} }$  as the $(2,1)$-category with the same objects as 
$\RTop_{\Scal}$,  pinned geometric
morphisms as $1$-morphisms and $2$-morphisms natural isomorphisms between pinned geometric morphisms.

For us, the topos $\Scal$ will always be the \'etale topos $\Scal = \Spec(K)_{\et}$ of a field $K$ that we abbreviate by $K_{\et}$. We then use the shorthand 
\begin{equation*}
	\RToppin_{K} \colonequals \RToppin_{\Spec(K)_{\et}} 
	\qquad  \text{ and } \qquad  
	\Hompin_{K}(\Xcal,\Ycal) \colonequals \Hompin_{\Spec(K)_{\et}}(\Xcal,\Ycal) .
\end{equation*}

As is shown in  \cite[Proposition 2.22]{CarlsonEtaleTopoi},  for $X$ and $Y$ locally 
of finite type over $K$, the groupoid $\Hompin_{K}(\Xet,\Yet)$ is equivalent to a set. 
Thus, 
the $(2,1)$-category spanned by pinned geometric morphisms between  \'etale topoi of 
schemes locally of finite type over $K$ is equivalent to a $1$-category.

\subsection{Base points of topoi}

Recall \cite[Section 8.4]{JohnstoneTopostheory} that given a connected topos $\mathcal{X}$ and a point 
$p \colon  \Set \rightarrow \mathcal{X}$, one has a Galois category 
$\mathcal{X}_{\fet}$, the full subcategory of $\mathcal{X}$ 
 consisting of objects of $\mathcal{X}$ 
 which are locally constant constructible, and the fiber functor is given by
 $p^\ast \colon  \mathcal{X}_{\fet} \rightarrow \Set$. Further, a geometric morphism 
 $\ph \colon  \mathcal{X} \rightarrow \mathcal{Y}$ gives rise to a map 
 $\ph^\ast \colon  (\mathcal{Y}, p^\ast \circ \ph^\ast) \rightarrow (\mathcal{X},p^\ast)$ of 
 Galois categories. In the case that $\Xet$ and $\Yet$ are  \'etale topoi of connected schemes 
 and $\ph \colon  \Xet \rightarrow \Yet$ is a geometric morphism of \'etale topoi, and $\overline{x}$ is a basepoint
 for $X$, one gets, by considering automorphisms groups 
 of fiber functors, a map 
 \[
 \ph_\ast \colon  \pi_1(X,\overline{x}) \rightarrow \pi_1(Y,\overline{x} \circ \ph)
 \]
 of fundamental groups.
 
\begin{recollection} \label{rec:pinnedetale}
Suppose that $X$ and $Y$ are of finite type over a perfect field $K$ and that $\ph \colon  \Xet \rightarrow \Yet$
is a pinned morphism of \'etale topoi. Then for any algebraic extension $L/K$ and any morphism
$x\colon\Spec L \rightarrow X$ over $K$, the composite 
\[
\ph \circ x_{\et} \colon L_{\et}  \longrightarrow  Y_{\et}
\]
comes from a unique morphism of schemes $\Spec L \rightarrow Y$  over $K$ \cite[Proposition 3.6]{CarlsonEtaleTopoi}. This implies that we have, for every  algebraic extension $L/K$, a map $\ph(L) \colon X(L) \rightarrow Y(L)$.  In the colimt over all $L$ we obtain a unique map 
\[
\ph(\overline{K}) \colon X(\overline{K}) \longrightarrow Y(\overline{K})
\]
such that for all points $a \in X(L) \subseteq X(\overline{K})$ we have $\ph \circ a_{\et} = b_{\et}$ for $b = \ph(L)(a)$. 

Further, for any quasi-compact and separated \'etale open $j \colon U \rightarrow Y$, we 
know by \cite[Theorem A.8]{CarlsonEtaleTopoi} that $\ph^*U \in \Xet$ is represented by a quasi-compact 
and separated  \'etale open $j'  \colon  \ph^* U \rightarrow Y$. By taking slice topoi, we get a geometric morphism $(\ph^*U)_{\et} \rightarrow U_{\et}$, the restriction of $\ph$, which is again pinned. This 
follows from the fact that $\ph$, $j_{\et}$ and $j'_{\et}$ are pinned, and the fact that, 
since $j: U \rightarrow Y$ is  \'etale and quasicompact, it is quasi-finite.
\end{recollection}

\section{Morphisms between  \'etale topoi and fundamental groups} \label{sec:pi1}

The purpose of this section is to show (\cref{prop:hyperbolicopen})  that given a non-constant map 
$\ph  \colon   U_{\et} \rightarrow V_{\et}$  over $K_{\et}$ between the \'etale topoi of smooth geometrically connected curves $U$ and $V$ over a Kummer-faithful field $K$, see \cref{defi:Kummer_faithful}, that the induced map of fundamental groups
 $\ph_\ast \colon \pi_1(U,\bar u) \rightarrow \pi_1(V,\bar v)$ is open, where $\bar v = \ph(\bar u)$.

Throughout \cref{sec:pi1}, we fix a field $K$ with a fixed algebraic closure $\overline{K}$, i.e. a base point for $\Spec(K)$. As soon as we have defined the notion of a Kummer-faithful field, the field $K$ will be assumed to be Kummer-faithful. We will further assume for the main result of this section that $K$ has characteristic $0$.

 \subsection{Quasi-sections and point-theoretic maps} \label{subseq:quasi_sections}

Let $X/K$ be a geometrically connected scheme of finite type
and endow $X$ with a geometric point $\bar x$ compatible with $\overline K$. For any finite extension $L/K$ inside $\overline{K}$ the group $\Gal_L = \Gal(\overline{K}/L)$ is an open subgroup of $\Gal_K = \Gal(\overline{K}/K)$.  We denote by 
\[
\cS_{X/K}(L) \coloneq \{ s\colon \Gal_L \to \pi_1(X,\bar x) \ ; \text{ over } \Gal_K\}/_{\pi_1(X_{\bar K},\bar x)}
\]
the set of $\pi_1(X_{\bar K},\bar x)$ conjugacy classes of $L$-\defemph{rational quasi-sections} (i.e., sections only defined on the open subgroup $\Gal_L$). Functoriality of the \'etale fundamental group yields the non-abelian Kummer map
\[
X(L) \longrightarrow \cS_{X/K}(L), \qquad a \mapsto \big(\pi_1(a)\colon\Gal_L \to \pi_1(X,\bar x) \big).
\]
We may pass to the limit over all $L \subseteq \overline K$ that are finite over $K$ and obtain
\[
\kappa\colon X(\overline{K}) \longrightarrow \cS_{X/K}(\overline{K}) \coloneq  \colim_{L}  \cS_{X/K}(L),
\]
the non-abelian Kummer map from $\overline{K}$-rational points to the set of \defemph{quasi-sections}. The quasi-sections in the image of $\kappa$ are called \defemph{geometric quasi-sections}. 

\begin{remark}
\begin{enumerate}
	\item
	Any $L$-rational geometric quasi-section defines a $\pi_1(X_{\bar K},\bar x)$-conjugacy class of closed subgroups of $\pi_1(X,\bar x)$ as the image of the section. The maximal subgroups among these images are nothing but the (conjugacy classses of) decomposition subgroups associated to closed points of $X$ (or rather some choice of universal covering).
	\item
	Note that the sets of quasi-sections $\cS_{X/K}(L)$ and $\cS_{X/K}(\overline{K})$ do not depend on the choice of a base point of $X$ up to canonical and compatible bijections.
\end{enumerate}
\end{remark}

\begin{definition}
Let $X$ and $Y$ be geometrically connected schemes of finite type over $K$. A homomorphism $\psi \colon  \pi_1(X,\bar x) \to \pi_1(Y,\bar y)$ over $\Gal_K$ is called \defemph{point-theoretic} if composition with $\psi$ preserves geometric quasi-sections. In other words, for all finite $L/K$ inside $\overline{K}$ and all points $a \in X(L)$ there is a finite extension $L'/L$ inside $\overline{K}$ and a point $b \in Y(L')$ such that $\pi_1(b) = (\psi \circ \pi_1(a))|_{\Gal_{L'}}$.
\end{definition}

\begin{lemma} \label{lemma:pinned_yields_pointtheoretic}
Let $\ph   \colon   X_{\et} \rightarrow Y_{\et}$ be a pinned morphism of topoi over $K_{\et}$
between two geometrically connected varieites over $K$. Then the induced map 
$\ph_\ast   \colon   \pi_1(X,\bar x) \rightarrow \pi_1(Y,\bar y)$ is point-theoretic. 
\end{lemma}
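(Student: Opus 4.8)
The statement asserts that if $\ph\colon X_{\et}\to Y_{\et}$ is pinned, then the induced $\ph_\ast\colon\pi_1(X,\bar x)\to\pi_1(Y,\bar y)$ sends geometric quasi-sections to geometric quasi-sections. The plan is to unwind the definition of point-theoretic and match it against the structure recalled in \cref{rec:pinnedetale}. Fix a finite extension $L/K$ inside $\overline K$ and a point $a\in X(L)$, so $a\colon\Spec L\to X$ is a morphism over $K$; the associated quasi-section is the conjugacy class of $\pi_1(a)\colon\Gal_L\to\pi_1(X,\bar x)$, defined using a choice of geometric point over $\bar x$. By \cref{rec:pinnedetale}, applied to the algebraic extension $L/K$, the composite $\ph\circ a_{\et}\colon L_{\et}\to Y_{\et}$ comes from a \emph{unique} morphism of schemes $b\colon\Spec L\to Y$ over $K$, i.e. a point $b=\ph(L)(a)\in Y(L)$. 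One expects this $b$ to witness point-theoreticity, already with $L'=L$: the quasi-section $\pi_1(b)\colon\Gal_L\to\pi_1(Y,\bar y)$ should equal (up to conjugacy) $\ph_\ast\circ\pi_1(a)$.

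The key step is therefore to verify the identity of quasi-sections
\[
\pi_1(b) \;=\; \big(\ph_\ast\circ\pi_1(a)\big)\big|_{\Gal_L}
\]
as conjugacy classes under $\pi_1(Y_{\bar K},\bar y)$. The natural way to do this is functoriality of $\pi_1$ under geometric morphisms of topoi, as recalled just before \cref{rec:pinnedetale}: the map on fundamental groups induced by a geometric morphism of connected \'etale topoi is obtained from the action on automorphism groups of fiber functors, and this construction is functorial in composites. Since $b_{\et}=\ph\circ a_{\et}$ as geometric morphisms $L_{\et}\to Y_{\et}$, and $\Spec L$ is connected (being the spectrum of a field), applying this functoriality to the composite gives $\pi_1(b)=\ph_\ast\circ\pi_1(a)$ on the nose, once one fixes the base points compatibly — $a_{\et}$ carries the base point $\overline{L}$ of $L_{\et}$ to a base point of $X_{\et}$ conjugate to $\bar x$, and the ambiguity in that choice is exactly absorbed by passing to $\pi_1(X_{\bar K},\bar x)$- and $\pi_1(Y_{\bar K},\bar y)$-conjugacy classes. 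Note $\Gal_L=\pi_1(\Spec L,\overline L)$, so $\pi_1(b)$ is literally a quasi-section in $\cS_{Y/K}(L)$, and it lies in the image of $\kappa$ by construction, hence is geometric.

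I expect the main obstacle to be purely bookkeeping: tracking base points and the conjugacy-class quotients carefully enough that the equality $\pi_1(b)=\ph_\ast\circ\pi_1(a)$ is genuinely an equality of elements of $\cS_{Y/K}(L)$ rather than just a vague compatibility. In particular one must be slightly careful that the map $\ph_\ast$ appearing in the lemma is defined with respect to the base point $\bar y=\ph(\bar x)$, and that the base point of $Y_{\et}$ underlying $b_{\et}$ is indeed $\bar y$ up to $\pi_1(Y_{\bar K},\bar y)$-conjugacy — this is where the functoriality statement $\ph_\ast\colon\pi_1(X,\bar x)\to\pi_1(Y,\bar x\circ\ph)$ from the paragraph preceding \cref{rec:pinnedetale} is used, together with the fact that changing the base point along a path conjugates the section. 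Once the base-point conventions are pinned down, the proof is immediate from \cref{rec:pinnedetale} and functoriality of $\pi_1$, with no further input needed; in particular one does not even need to enlarge $L$ to some $L'$, so the verification is cleaner than the definition of point-theoretic allows for.
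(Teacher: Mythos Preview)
Your proposal is correct and follows essentially the same approach as the paper: use \cref{rec:pinnedetale} to produce $b\in Y(L)$ with $b_{\et}=\ph\circ a_{\et}$, then invoke functoriality of $\pi_1$ for geometric morphisms to get $\pi_1(b)=\ph_\ast\circ\pi_1(a)$. The paper's proof is a two-line version of what you wrote, omitting the base-point bookkeeping you (rightly) flag as the only thing to check.
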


\begin{proof}
Indeed, by  \cref{rec:pinnedetale}, for every finite extension $L/K$ and every point $a \in X(L)$ the composite
\[
\ph \circ a_{\et} \colon (\Spec L)_{\et} \longrightarrow X_{\et} \longrightarrow Y_{\et}
\]
is induced by a morphism of schemes $b \colon  \Spec L \rightarrow Y$. Hence also 
\[
\ph_\ast \circ \pi_1(a) = (\ph \circ a_{\et})_\ast  (b_{\et})_\ast = \pi_1(b). \qedhere
\]
\end{proof}

 \subsection{Galois sections of the generalized Jacobian} \label{subseq:jacobian}
In this subsection we ask the field $K$ to be of characteristic zero. We extend a construction proposed in \cite[\S13.5]{StixRational} only in the case of smooth projective curves. 

\begin{remark}
\begin{enumerate}
	\item
	If $X$ is a disjoint union $X = \amalg_i X_i$ of geometrically connected schemes of finite type $X_i$,
	then we can extend the definition of the sets of geometric quasi-sections as 
	\[
	\cS_{X/K}(L) \coloneq \amalg_i \cS_{X_i/K}(L)
	\]
	and similarly with coefficients in $\overline{K}$.
	
	\item
	Let $X$ and $Y$ be geometrically connected schemes of finite type over $K$. 
	The K\"unneth-formula yields an isomorphism
	\[
	(\pr_{1\,\ast}, \pr_{2,\ast}) \colon \pi_1(X \times_K Y,(\bar x, \bar y)) 
	\isomto  
	\pi_1(X,\bar x) \times_{\Gal_K}  \pi_1(Y,\bar y).
	\]
	It follows that the projection maps induce canonical bijections
	\[
	\cS_{X\times_K Y/K}(L) \isomto \cS_{X/K}(L) \times \cS_{Y/K}(L),
	\]
	and similarly with coefficients in $\overline{K}$.	
\end{enumerate}
\end{remark}

\begin{recollection} \label{recollection:genjac}
Let $U$ be a smooth geometrically connected curve over $K$ with smooth projective completion $X /K$. The generalized Picard scheme $\Jac^\bullet_U \coloneq \Pic_{X,D}$ parametrizes line bundles on $X$, together with a trivialization along $D = X \setminus U$. The connected components of $\Jac^\bullet_U$ are the subschemes $\Jac_U^d$ of line bundles of degree $d \in \bZ$.  For $d=0$ we recover the generalized Jacobian $\Jac_U = \Jac^0_U$ of $U$.

Tensor product of line bundles defines an abelian algebraic group structure on $\Jac^\bullet_U$. Therefore 
\[
\cS_{\Jac^\bullet_U/K}(\overline{K})
\]
is an abelian group. The degree map defines a short exact sequence
\[
0 \to \cS_{\Jac_U}(\overline{K}) \to \cS_{\Jac^\bullet_U/K}(\overline{K}) \xrightarrow{\deg} \bZ \to 0.
\]
For all integers $d$ taking $d$-th tensor powers yields a map $\Jac^1_{U} \to \Jac^d_{U}$ that induces an isomorphism  
\[
d_\ast \big(\pi_1(\Jac^1_{U})\big) \isomto \pi_1(\Jac^d_{U}) 
\]
over $\Gal_K$, where $d_\ast \big(\pi_1(\Jac^1_{U})\big)$ is the pushout of the extension
\[
1 \to  \pi_1((\Jac^1_{U})_{\bar K}) \to  \pi_1(\Jac^1_{U})  \to \Gal_K \to 1
\]
along the multiplication by $d$ map on the abelian group $\pi_1((\Jac^1_{U})_{\bar K})$.  Note that for $d=0$, the resulting base point in $\Jac_U^0$ is $0$, and the extension of fundamental groups
is canonically split.
\end{recollection}

\begin{recollection} \label{recollection:albanese}
    The generalized Abel-Jacobi map,
    \[
    j_U  \colon  U \longrightarrow \Jac^1_{U}, \qquad P \mapsto \dO_X(P),
    \]   
    is the universal map into a torsor under a semi-abelian variety, the semi-abelian Albanese morphism. 
    The non-abelian Kummer map is compatible with the Abel-Jacobi map. It moreover factors over the group of $0$-cycles on $U_{\bar K}$ as follows:
    \[
\xymatrix@M+1ex{
U(\overline{K}) \ar[d]^{\kappa} \ar[r] & 
Z_0(U_{\bar K}) \ar[r]^{j_U} & 
\Jac^\bullet_U(\overline K) \ar[d]^{\kappa} \\
\cS_{U/K}(\overline{K})  \ar[rr]^{j_{U,\ast}}  && 
\cS_{\Jac^\bullet_U/K}(\overline{K}) }
\]
    
    From \cite[Section II]{KatzLangFiniteness} we deduce that $\pi_1(j_U)$ induces an isomorphism
     \[
     \pi^{(\ab)}_1(U,\bar u) \rightarrow \pi_1(\Jac^1_{U},\bar u_1),
     \]
     where $\pi_1^{(\ab)}(U,\bar u)$ denotes the geometric abelianization of $\pi_1(U,\bar u)$, and $\bar u_1 = j_U(\bar u)$. It follows from the description of $\pi_1(\Jac^d_{U})$ given above, that for all integers $d$ the extension $\pi_1(\Jac^d_{U})$ can be reconstructed from $\pi_1(U,\bar u)$. This proves the following lemma.
\end{recollection}

\begin{lemma} \label{lem:effect on abelianized sections}
Let $U, V$ be smooth, geometrically connected curves over $K$, and let 
\[
\psi \colon  \pi_1(U,\bar u) \to \pi_1(V,\bar v)
\]
be a homomorphism over $\Gal_K$. Then there are unique group homomorphisms
\[
\psi^{\ab,d} \colon  \pi_1(\Jac^d_U,\bar u_d) \to \pi_1(\Jac^d_V,\bar v_d)
\]
over $\Gal_K$, where $\bar u_d = d (j_U(\bar u))$ and similarly $\bar v_d$ such that
\[
\psi^{\ab,1} \circ \pi_1(j_U) = \pi_1(j_V) \circ \psi
\]
and the $\psi^{\ab,d}$'s are compatible with the maps induced by multiplication on $\Jac^\bullet_U$ and $\Jac^\bullet_V$.

In particular, we obtain a commutative diagram
\[
\xymatrix@M+1ex{
\cS_{U/K}(\overline{K}) \ar[r]^{\psi}  \ar[d]^{j_{U,\ast}} & 
\cS_{V/K}(\overline{K}) \ar[d]^{j_{V,\ast}} \\
\cS_{\Jac^\bullet_U/K}(\overline{K}) \ar[r]^{\psi{\ab,\bullet}}  & 
\cS_{\Jac^\bullet_V/K}(\overline{K})}
\]
in which the bottom map is a group homomorphism.
\end{lemma}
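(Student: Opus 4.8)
The plan is to build the maps $\psi^{\ab,d}$ by transporting $\psi$ through the identifications provided in \cref{recollection:albanese}. First I would treat the case $d=1$. By \cite[Section II]{KatzLangFiniteness} as recalled there, $\pi_1(j_U)$ induces an isomorphism $\pi_1^{(\ab)}(U,\bar u) \isomto \pi_1(\Jac^1_U,\bar u_1)$ over $\Gal_K$, and likewise for $V$. The homomorphism $\psi$ carries the kernel of $\pi_1(U_{\bar K},\bar u) \to \pi_1^{(\ab)}(U,\bar u)$ — namely the closure of the commutator subgroup — into the corresponding subgroup for $V$ (the commutator subgroup is characteristic, and $\psi$ is continuous and respects the projections to $\Gal_K$), hence descends to a homomorphism $\pi_1^{(\ab)}(U,\bar u) \to \pi_1^{(\ab)}(V,\bar v)$ over $\Gal_K$. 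Conjugating by the Abel--Jacobi isomorphisms yields $\psi^{\ab,1}$, and it satisfies $\psi^{\ab,1} \circ \pi_1(j_U) = \pi_1(j_V) \circ \psi$ essentially by construction; uniqueness is clear since $\pi_1(j_U)$ is an isomorphism after abelianization and the abelianization is the only obstruction to uniqueness.

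Next I would produce $\psi^{\ab,d}$ for general $d \in \bZ$ using the description of $\pi_1(\Jac^d_U)$ in \cref{recollection:genjac}: the multiplication-by-$d$ map $\Jac^1_U \to \Jac^d_U$ induces an isomorphism $d_\ast(\pi_1(\Jac^1_U)) \isomto \pi_1(\Jac^d_U)$, where the left side is the pushout of the extension $1 \to \pi_1((\Jac^1_U)_{\bar K}) \to \pi_1(\Jac^1_U) \to \Gal_K \to 1$ along multiplication by $d$ on the (abelian) geometric fundamental group. Since $\psi^{\ab,1}$ is a homomorphism of such extensions over $\Gal_K$ and multiplication by $d$ is a natural transformation of abelian groups, functoriality of the pushout gives a unique map $d_\ast(\psi^{\ab,1})$ between the pushouts, hence, via the isomorphisms just recalled, a unique $\psi^{\ab,d} \colon \pi_1(\Jac^d_U,\bar u_d) \to \pi_1(\Jac^d_V,\bar v_d)$ over $\Gal_K$. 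Compatibility with the maps induced by multiplication on $\Jac^\bullet_U$ and $\Jac^\bullet_V$ follows from the same functoriality: both $d$-th tensor power maps $\Jac^1 \to \Jac^d$ and the transition maps $\Jac^d \to \Jac^{d'}$ are implemented at the level of $\pi_1$ by pushout along multiplication maps on the geometric abelianized fundamental group, and these form a commuting system compatibly with $\psi^{\ab,1}$.

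For the final assertion I would pass to quasi-sections. Applying $\cS_{(-)/K}(\overline K)$ to the commuting square $\psi^{\ab,1} \circ \pi_1(j_U) = \pi_1(j_V) \circ \psi$ — using that composing a quasi-section with a $\Gal_K$-homomorphism of fundamental groups gives a quasi-section, so each $\psi$, $\psi^{\ab,d}$ induces a well-defined map on conjugacy classes of quasi-sections — gives the degree-$1$ part of the desired diagram with $j_{U,\ast}$ and $j_{V,\ast}$ the vertical maps. Assembling over all $d$ via the degree decomposition $\cS_{\Jac^\bullet_U/K}(\overline K) = \coprod_d \cS_{\Jac^d_U/K}(\overline K)$ and the short exact sequence with $\deg$, and using the compatibility of the $\psi^{\ab,d}$ with the multiplication maps, the resulting map $\psi^{\ab,\bullet} \colon \cS_{\Jac^\bullet_U/K}(\overline K) \to \cS_{\Jac^\bullet_V/K}(\overline K)$ is a group homomorphism: on $\cS_{\Jac_U/K}(\overline K) = \cS_{\Jac^0_U/K}(\overline K)$ it is additive because $\psi^{\ab,0}$ is a homomorphism of the canonically split extensions and $\cS$ of an abelian group scheme is a group functor, and the compatibility with tensor powers (addition of degrees) upgrades this to additivity across all components. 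Commutativity of the square with the Kummer maps $\kappa$ on the right is \cref{recollection:albanese}, already established.

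The main obstacle I expect is the bookkeeping around base points: the schemes $\Jac^d_U$ are torsors rather than groups for $d \neq 0$, the base point $\bar u_d = d(j_U(\bar u))$ moves with $d$, and the extensions $\pi_1(\Jac^d_U)$ are only canonically split for $d=0$; one must check that the pushout construction and the identifications of \cref{recollection:genjac} are natural enough in $\psi$ to make the $\psi^{\ab,d}$ well-defined and mutually compatible, and that "group homomorphism on the bottom" is interpreted correctly given that only the degree-$0$ component carries an intrinsic group structure while the full $\cS_{\Jac^\bullet}$ is a group via the tensor product on $\Jac^\bullet$. Everything else is formal functoriality.
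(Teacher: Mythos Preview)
Your proposal is correct and follows essentially the same route as the paper: the paper's proof is just the sentence ``This proves the following lemma'' at the end of \cref{recollection:albanese}, relying on the identification $\pi_1^{(\ab)}(U,\bar u)\isomto\pi_1(\Jac^1_U,\bar u_1)$ together with the pushout description of $\pi_1(\Jac^d_U)$ from \cref{recollection:genjac}, and you have spelled out exactly this reconstruction and its functoriality in $\psi$. Your final paragraph on base-point bookkeeping is a fair assessment of what needs checking, but none of it is a genuine obstacle.
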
 

\subsection{Sections over Kummer-faithful fields} \label{subseq:sectionskummer}
We recall the definition \cite[Definition 1.5]{MR3445958} for characteristic $0$,  see \cite[Definition 1.2]{Hoshi:KummerFaithful} for a version over perfect fields.
\begin{definition}
\label{defi:Kummer_faithful}
A  \defemph{Kummer-faithful} field in characteristic $0$ is a field $K$ of characteristic $0$ such that for all semi-abelian varieties $B/K$ the following equivalent conditions hold.
\begin{enumerate}[(a)]
	\item 
	The Kummer map $B(K) \to \rH^1(K,\pi_1^\ab(B_{\bar K}))$ is injective.
	\item
	The intersection $\bigcap_{n \geq 1} n B'(K) = 0$ is trivial, i.e. $B(K)$ does not contain arbitrarily divisible elements. 
\end{enumerate}
The equivalence is a consequence of the exact sequence of Kummer theory.
\end{definition}

\begin{remark} \label{rmk:sorite_Kummerfaithful}
\begin{enumerate}
	\item
	Note that the Kummer map of Kummer theory applied to $B/K$ agrees (by comparing with the $0$-section) with the non-abelian Kummer map of the section conjecture 
	\[
	B(K) \to \cS_{B/K}(K) = \rH^1(K, \pi_1(B_{\bar K})) = \rH^1(K,\pi_1^\ab(B_{\bar K})),
	\]
	see for example \cite[Corollary 71]{StixRational}.
	\item
	Sub-$p$-adic fields are Kummer-faithful, but not conversely, see for example 
	\cite[Remark 1.5.4]{MR3445958}, \cite[Remark 1.2.3]{Hoshi:KummerFaithful} and \cite{OhtaniKummer,MR4590273}.
	\item
	Weil restriction shows that finite extensions of Kummer-faithful fields are Kummer-faithful.
\end{enumerate}
\end{remark}

From now on $K$ is assumed to be a Kummer-faithful field. 

\begin{lemma}
\label{lem:kummermapsubsemiabelian}
Let 
$X/K$ be a geometrically connected scheme of finite type that admits an injective map $X \to W$ into a torsor $W/K$ under a semiabelian variety $B/K$.  Then for any finite extension $L/K$ (inside $\overline K$) the non-abelian Kummer map 
\[
\kappa \colon X(L)  \longrightarrow \cS_{X/K}(L)
\]
is injective and induces a bijection of $X(L)$ with the set of geometric $L$-rational quasi-sections.
\end{lemma}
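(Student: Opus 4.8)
The plan is to reduce the whole statement, along the embedding $j\colon X\hookrightarrow W$, to the defining property of Kummer‑faithfulness, namely the injectivity of the Kummer map of a semiabelian variety (\cref{defi:Kummer_faithful}\,(a)). First I would reduce to $L=K$: the field $L$ is again Kummer‑faithful by \cref{rmk:sorite_Kummerfaithful}\,(3), the base change $j_L\colon X_L\hookrightarrow W_L$ is an injective map into the torsor $W_L$ under the semiabelian variety $B_L$, and since $\pi_1(X_{\overline K})=\pi_1((X_L)_{\overline K})$ and $\pi_1(X_L,\bar x)=\pi_1(X,\bar x)\times_{\Gal_K}\Gal_L$ there is an evident identification $\cS_{X/K}(L)\cong\cS_{X_L/L}(L)$ compatible with the non‑abelian Kummer maps out of $X(L)=X_L(L)$. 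So it is enough to show that $\kappa\colon X(K)\to\cS_{X/K}(K)$ is injective with image the set of geometric $K$‑rational quasi‑sections.

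For injectivity, take $a,b\in X(K)$ with $\kappa(a)=\kappa(b)$. Post‑composing representing sections with $j_\ast\colon\pi_1(X,\bar x)\to\pi_1(W,\bar w)$, which carries $\pi_1(X_{\overline K})$ into $\pi_1(W_{\overline K})$, gives $\kappa_W(j(a))=\kappa_W(j(b))$ in $\cS_{W/K}(K)$; now $W(K)\neq\emptyset$ because it contains $j(a)$. Translation by $j(a)$ is an isomorphism of $K$‑schemes $W\isomto B$ taking $j(a)$ to the origin, and it is compatible with the formation of fundamental groups, of the sets $\cS$, and of the Kummer maps; hence, writing $c$ for the image of $j(b)$, we get $\kappa_B(c)=\kappa_B(0)$ in $\cS_{B/K}(K)$. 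By \cref{rmk:sorite_Kummerfaithful}\,(1) this group is $\rH^1(K,\pi_1^{\ab}(B_{\overline K}))$, $\kappa_B$ is the Kummer map of Kummer theory, and $\kappa_B(0)$ is the trivial class; as $K$ is Kummer‑faithful this map is injective, so $c=0$, that is $j(b)=j(a)$. Since $j$ is injective, in particular on $K$‑rational points, this gives $a=b$.

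It remains to identify the image. By construction $\kappa$ maps $X(K)$ bijectively onto its image, which consists of geometric $K$‑rational quasi‑sections; for the converse let $s\in\cS_{X/K}(K)$ have geometric image in $\cS_{X/K}(\overline K)$, and pick a finite Galois extension $L'/K$ together with $\bar a\in X(L')$ such that $s|_{\Gal_{L'}}=\kappa_{L'}(\bar a)$ (enlarging $L'$ so the colimit‑equality becomes an equality of $\pi_1(X_{\overline K})$‑conjugacy classes). Since $s$ is a homomorphic section on all of $\Gal_K$, conjugation by $s(\tilde\tau)$ for a lift $\tilde\tau\in\Gal_K$ of $\tau\in\Gal(L'/K)$ fixes the class $[s|_{\Gal_{L'}}]\in\cS_{X/K}(L')$; as this operation is Galois conjugation of quasi‑sections, it follows that $\kappa_{L'}({}^{\tau}\bar a)=\kappa_{L'}(\bar a)$ for every $\tau$. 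Re‑running the injectivity argument over the (again Kummer‑faithful) field $L'$, and using that $j$ is injective on $L'$‑points, yields ${}^{\tau}\bar a=\bar a$ for all $\tau$, hence $\bar a\in X(K)$; a comparison on $\Gal_{L'}$ then identifies $\kappa_K(\bar a)$ with $s$.

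The step I expect to be the main obstacle is handling the torsor $W$: it carries no canonical rational point, so the translation $W\isomto B$ that reduces everything to \cref{defi:Kummer_faithful}\,(a) is only available once such a point has been produced. In the injectivity part this is free (translate by $j(a)$), but in the identification of the image one must first extract a rational point from the given quasi‑section by the Galois‑descent argument above, and it is this descent — together with the book‑keeping needed to turn it into the honest equality $\kappa_K(\bar a)=s$ — that I expect to demand the most care.
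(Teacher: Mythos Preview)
The paper's own proof is a one-line appeal to naturality of the non-abelian Kummer map together with the semi-abelian analogue of \cite[Proposition~73]{StixRational}; you are therefore supplying the argument behind that citation rather than comparing against an independent proof. Your reduction to $L=K$, the injectivity argument via $j\colon X\hookrightarrow W$ and translation $W\isomto B$, and the Galois-descent steps showing that $\bar a$ is fixed by $\Gal(L'/K)$ (hence $\bar a\in X(K)$) are all correct and are exactly the content the reference is meant to provide.

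The one genuine gap is the sentence you already flag as delicate: ``a comparison on $\Gal_{L'}$ then identifies $\kappa_K(\bar a)$ with $s$''. Agreement of two classes in $\cS_{X/K}(L')$ does \emph{not} automatically force agreement in $\cS_{X/K}(K)$. After conjugating so that chosen representatives of $s$ and $\kappa_K(\bar a)$ coincide on $\Gal_{L'}$, their discrepancy is a $1$-cocycle $c\colon\Gal(L'/K)\to\pi_1(X_{\overline K})$ for the action twisted by $\kappa_K(\bar a)$, and nothing you have said forces its class in the (non-abelian) $H^1\bigl(\Gal(L'/K),\pi_1(X_{\overline K})\bigr)$ to vanish. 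Pushing to $W$ does not immediately help either: there the obstruction lives in the kernel of $H^1(\Gal_K,T)\to H^1(\Gal_{L'},T)$, namely $H^1\bigl(\Gal(L'/K),T^{\Gal_{L'}}\bigr)$, and Kummer-faithfulness alone does not make this group trivial. You have correctly isolated this as the step requiring the most care, but you have not actually supplied that care. Note, however, that for the immediate application in \cref{cor:pointtheoretic_actualmap} what is really used is only that the $\overline K$-point $\bar a$ descends to $X(L)$, and this your descent argument does establish.
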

\begin{proof}
This follows at once from the naturality of the non-abelian Kummer map, and the semi-abelian analogue of \cite[Proposition 73]{StixRational}. 
\end{proof}

The proof of the following is immediate in view of \cref{lem:kummermapsubsemiabelian}.

\begin{corollary} \label{cor:pointtheoretic_actualmap}
Let 
$X$ and $Y$ be a geometrically connected schemes of finite type over $K$ such that $Y$ 
admits an injective map $Y \to W$ into a torsor $W/K$ under a semiabelian variety $B/K$. 
Then for any point-theoretic homomorphism $\psi \colon  \pi_1(X,\bar x) \to \pi_1(Y,\bar y)$ over $\Gal_K$ there is a unique map of sets 
\[
\psi(\overline{K}) \colon X(\overline{K}) \longrightarrow Y(\overline{K}),
\]
which is  the colimit of the maps $\psi(L) \colon X(L) \longrightarrow Y(L)$ 
defined for all $a \in X(L)$ by the equality $\pi_1(b) = \psi \circ \pi_1(a)$ for $b = \psi(L)(a)$, i.e., $\psi(\overline{K})$ agrees with $\psi \circ - $ on geometric quasi-sections.
\end{corollary}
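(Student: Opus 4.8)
The plan is to build $\psi(\overline{K})$ level by level over all finite $L/K$ and pass to the colimit, the only non-formal input being \cref{lem:kummermapsubsemiabelian}. Fix a finite extension $L/K$ inside $\overline{K}$ and a point $a \in X(L)$. Since $\psi$ is a homomorphism over $\Gal_K$, the composite $\psi \circ \pi_1(a) \colon \Gal_L \to \pi_1(Y,\bar y)$ is again a section of $\pi_1(Y,\bar y) \to \Gal_K$ over the open subgroup $\Gal_L$, i.e.\ an $L$-rational quasi-section of $Y$, hence defines a class in $\cS_{Y/K}(L)$. Because $\psi$ is point-theoretic, the image of this class in $\cS_{Y/K}(\overline{K})$ lies in the image of the non-abelian Kummer map $\kappa \colon Y(\overline{K}) \to \cS_{Y/K}(\overline{K})$; that is, the class of $\psi \circ \pi_1(a)$ in $\cS_{Y/K}(L)$ is a geometric $L$-rational quasi-section in the sense of \cref{subseq:quasi_sections}. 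Now apply \cref{lem:kummermapsubsemiabelian} to $Y$, which is legitimate since $Y \hookrightarrow W$ with $W$ a torsor under a semiabelian variety: the map $\kappa \colon Y(L) \to \cS_{Y/K}(L)$ is injective and identifies $Y(L)$ with the set of geometric $L$-rational quasi-sections. Therefore there is a unique $b \in Y(L)$ whose class equals that of $\psi \circ \pi_1(a)$ in $\cS_{Y/K}(L)$, and we set $\psi(L)(a) \coloneq b$; choosing the representative appropriately, $\pi_1(b) = \psi \circ \pi_1(a)$.

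Next I would verify the two compatibilities. First, the point $b' \in Y(L')$ produced a priori by the point-theoretic hypothesis over some finite $L'/L$ satisfies $\pi_1(b') = (\psi \circ \pi_1(a))|_{\Gal_{L'}}$, so $b$ (viewed in $Y(L')$) and $b'$ have the same class in $\cS_{Y/K}(L')$; by the injectivity part of \cref{lem:kummermapsubsemiabelian} at level $L'$ this forces $b' = b$, so in particular $b'$ is already $L$-rational, consistent with the statement. Second, for $L \subseteq \tilde L$ both the image of $\psi(L)(a)$ in $Y(\tilde L)$ and the point $\psi(\tilde L)(a)$ have class equal to that of $\psi \circ \pi_1(a)$ in $\cS_{Y/K}(\tilde L)$, hence coincide by injectivity of $\kappa$ at level $\tilde L$. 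Thus the maps $\psi(L)$ are compatible and assemble into a single map $\psi(\overline{K}) = \colim_L \psi(L) \colon X(\overline{K}) \to Y(\overline{K})$.

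By construction $\kappa\big(\psi(\overline{K})(a)\big) = \psi_\ast \kappa(a)$ for every $a \in X(\overline{K})$, which is precisely the assertion that $\psi(\overline{K})$ agrees with $\psi \circ -$ on geometric quasi-sections; and uniqueness of a map with this property follows from the injectivity of $\kappa \colon Y(\overline{K}) \to \cS_{Y/K}(\overline{K})$, itself a consequence of the injectivity statements in \cref{lem:kummermapsubsemiabelian} over all finite levels. The only genuinely non-formal ingredient is \cref{lem:kummermapsubsemiabelian}: the point-theoretic hypothesis by itself produces a point of $Y$ only over an a priori larger field and only up to the ambiguity in the definition, and it is the injectivity of the non-abelian Kummer map for $Y$ (hence Kummer-faithfulness of $K$) that simultaneously makes $\psi(L)(a)$ well defined, forces the required descent to $Y(L)$, and yields uniqueness; so, as the paper indicates, the argument is immediate once \cref{lem:kummermapsubsemiabelian} is available.
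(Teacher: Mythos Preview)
Your argument is correct and is exactly the detailed unpacking of what the paper means by ``immediate in view of \cref{lem:kummermapsubsemiabelian}'': you use the bijection part of that lemma to upgrade the a priori $L'$-rational witness from the point-theoretic hypothesis to a unique $L$-rational point, and the injectivity part to get compatibility and uniqueness at every level. There is no difference in approach; the paper simply omits these verifications.
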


\begin{proposition} \label{prop:pinnedpoint}
Let 
$X$ and $Y$ be geometrically connected schemes of finite type over $K$ such that $Y$ 
admits an injective map $Y \to W$ into a torsor $W/K$ under a semiabelian variety $B/K$. 
Let $\ph \colon X_{et} \rightarrow Y_{et}$ be a pinned morphism of topoi over $K$. Then the two maps
\[
\ph(\overline{K}), \ph_\ast(\overline{K})  \colon  X(\overline{K}) \longrightarrow Y(\overline{K})
\]
agree, where $\ph_\ast \colon \pi_1(X,\bar x) \to \pi_1(Y,\bar y)$ is the map induced by $\ph$, and $\bar y = \bar x \circ \ph$.
\end{proposition}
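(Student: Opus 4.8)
The plan is to compare the two maps $\ph(\overline{K})$ and $\ph_\ast(\overline{K})$ pointwise on $L$-rational points for every finite extension $L/K$, exploiting that both are compatible with the non-abelian Kummer map. First I recall the two sources of the maps: on the one hand, $\ph(\overline{K})$ is the colimit of the maps $\ph(L)\colon X(L) \to Y(L)$ from \cref{rec:pinnedetale}, characterized by $\ph \circ a_{\et} = b_{\et}$ for $b = \ph(L)(a)$; on the other hand, by \cref{lemma:pinned_yields_pointtheoretic} the induced map $\ph_\ast\colon \pi_1(X,\bar x) \to \pi_1(Y,\bar y)$ is point-theoretic, so \cref{cor:pointtheoretic_actualmap} applies (its hypothesis that $Y$ embeds into a torsor under a semiabelian variety is exactly what we have assumed), giving the colimit map $\ph_\ast(\overline{K})$ characterized by $\pi_1(c) = \ph_\ast \circ \pi_1(a)$ for $c = \ph_\ast(L)(a)$.

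Next I fix a finite extension $L/K$ and a point $a \in X(L)$, and write $b = \ph(L)(a)$; a priori $b$ may only be defined over a finite extension of $L$, but by enlarging $L$ we may assume $b \in Y(L)$. I want to show $\ph_\ast(L)(a) = b$ as well. Applying the functor $\pi_1$ to the equality $\ph \circ a_{\et} = b_{\et}$ of geometric morphisms $(\Spec L)_{\et} \to Y_{\et}$ (with compatible base points, as in the proof of \cref{lemma:pinned_yields_pointtheoretic}), we obtain
\[
\ph_\ast \circ \pi_1(a) = (\ph \circ a_{\et})_\ast = (b_{\et})_\ast = \pi_1(b)
\]
as homomorphisms $\Gal_L \to \pi_1(Y,\bar y)$ over $\Gal_K$. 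By the characterization of $\ph_\ast(L)$ in \cref{cor:pointtheoretic_actualmap}, namely $\pi_1\big(\ph_\ast(L)(a)\big) = \ph_\ast \circ \pi_1(a)$ as conjugacy classes of quasi-sections, this says $\pi_1\big(\ph_\ast(L)(a)\big)$ and $\pi_1(b)$ define the same $\pi_1(Y_{\bar K},\bar y)$-conjugacy class. By \cref{lem:kummermapsubsemiabelian} applied to $Y$, the non-abelian Kummer map $\kappa\colon Y(L) \to \cS_{Y/K}(L)$ is injective, so $\ph_\ast(L)(a) = b = \ph(L)(a)$.

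Finally, since $L$ and $a \in X(L)$ were arbitrary and every $\overline{K}$-point of $X$ is an $L$-point for some finite $L/K$, passing to the colimit over all such $L$ gives $\ph(\overline{K}) = \ph_\ast(\overline{K})$ as maps $X(\overline{K}) \to Y(\overline{K})$. I do not expect any serious obstacle here: the content is entirely bookkeeping in matching up the two characterizations, and the only point requiring a little care is the compatibility of base points when passing from the equality of geometric morphisms to the equality of $\pi_1$-homomorphisms — the base point $\bar y$ on the $Y$-side must be taken to be $\bar x \circ \ph$ (equivalently the image of the base point of $\Spec L$ under $b$), which is exactly the hypothesis built into the statement, and the ambiguity in choosing a path does not affect the conjugacy class, which is all that enters via the injectivity of $\kappa$.
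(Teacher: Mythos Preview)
Your proof is correct and follows essentially the same approach as the paper's: the paper's proof simply remarks that the computation in the proof of \cref{lemma:pinned_yields_pointtheoretic}, namely $\ph_\ast \circ \pi_1(a) = (\ph \circ a_{\et})_\ast = (b_{\et})_\ast = \pi_1(b)$ with $b = \ph(L)(a)$, already gives exactly the identity needed, and then the uniqueness in \cref{cor:pointtheoretic_actualmap} (equivalently the injectivity of $\kappa$ from \cref{lem:kummermapsubsemiabelian}) finishes. You have spelled out this one-line observation in full detail; the only superfluous step is the enlargement of $L$, since \cref{rec:pinnedetale} already gives $\ph(L)\colon X(L)\to Y(L)$ with $b\in Y(L)$.
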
 
\begin{proof}
By \cref{lemma:pinned_yields_pointtheoretic} the homomorphism $\ph_\ast$ is point-theoretic, so that $\ph_\ast(\overline{K})$ is well defined by \cref{cor:pointtheoretic_actualmap}. The proof of \cref{lemma:pinned_yields_pointtheoretic} actually shows directly the apparently stronger statement of the claim in view of the functoriality of the \'etale fundamental group.
\end{proof}

 \subsection{Point-theoretic maps and the Abel-Jacobi map} \label{subseq:AJ}
We remind the reader that we assume that $K$ is a Kummer-faithful field of characteristic $0$. 

\begin{lemma} \label{lemma:albanese}
Let $U$ be a geometrically connected smooth curve over $K$.
Then the non-abelian Kummer map 
\[
\kappa \colon \Jac^\bullet_{U}(\overline{K}) \longrightarrow  \cS_{\Jac^\bullet_U/K}(\overline K) 
\]
is injective.
\end{lemma}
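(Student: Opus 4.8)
The plan is to deduce this at once from \cref{lem:kummermapsubsemiabelian}, working one connected component of $\Jac^\bullet_U$ at a time, after observing that each such component is itself a torsor under a semi-abelian variety.

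Write $X/K$ for the smooth projective completion of $U$ and $D = X \setminus U$, so that $\Jac^\bullet_U = \Pic_{X,D}$ and, by \cref{recollection:genjac}, $\Jac^\bullet_U = \coprod_{d \in \bZ} \Jac^d_U$ is the decomposition into connected components, with $\Jac_U = \Jac^0_U$ the generalized Jacobian. First I would record two standard facts. One: $\Jac_U$ is a semi-abelian variety over $K$, being an extension of the abelian variety $\Pic^0_X$ by the torus attached to $D$ (and equal to $\Pic^0_X$ when $D = \emptyset$); this is the classical theory of generalized Jacobians. Two: the decomposition into components is defined over $K$ with trivial Galois action on the index set $\bZ$, since the degree of a line bundle is insensitive to field extension; hence each $\Jac^d_U$ is a geometrically connected smooth $K$-scheme of finite type, and the abelian group structure on $\Jac^\bullet_U$ (tensor product of line bundles) exhibits $\Jac^d_U$ as a $K$-torsor under $\Jac_U$, no rational point being required.

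Next I would invoke the disjoint-union conventions recalled earlier, which give $\cS_{\Jac^\bullet_U/K}(\overline{K}) = \coprod_{d \in \bZ} \cS_{\Jac^d_U/K}(\overline{K})$ compatibly with the analogous decomposition of $\Jac^\bullet_U(\overline{K})$: an $\overline{K}$-point lies on a single component $\Jac^d_U$, and the quasi-section it defines factors through $\pi_1(\Jac^d_U)$. Thus $\kappa$ is, by definition, the disjoint union over $d \in \bZ$ of the componentwise non-abelian Kummer maps $\kappa \colon \Jac^d_U(\overline{K}) \to \cS_{\Jac^d_U/K}(\overline{K})$, and it suffices to prove each of these is injective.

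Fix $d$. Applying \cref{lem:kummermapsubsemiabelian} with $X = W = \Jac^d_U$, $B = \Jac_U$, and the identity as the required injection $X \hookrightarrow W$, one gets that $\kappa \colon \Jac^d_U(L) \to \cS_{\Jac^d_U/K}(L)$ is injective for every finite extension $L/K$ inside $\overline{K}$; injectivity then propagates to the filtered colimit over such $L$, since any two $\overline{K}$-points with equal image are already defined and identified over some finite $L$. This completes the argument. There is no genuine obstacle: the content lies entirely in the two standard facts above (that $\Jac_U$ is semi-abelian and that the component structure descends to $K$) together with the bookkeeping around disjoint unions, while the injectivity itself is precisely the case of \cref{lem:kummermapsubsemiabelian} in which the ambient torsor is taken to be the variety itself.
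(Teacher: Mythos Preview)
Your proof is correct and follows essentially the same approach as the paper: both reduce to the fact that each component $\Jac^d_U$ is a torsor under the semi-abelian variety $\Jac_U$ and then invoke Kummer-faithfulness. The paper's one-line proof cites the definition of Kummer-faithful together with \cref{rmk:sorite_Kummerfaithful} directly, whereas you route the same content through \cref{lem:kummermapsubsemiabelian} and spell out the component decomposition and colimit step; these are the same argument at different levels of detail.
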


\begin{proof}
This follows from the definition of Kummer-faithful in combination with 
\cref{rmk:sorite_Kummerfaithful} since the generalized Jacobian $\Jac_U^0$ is a semi-abelian variety over $K$.
\end{proof}

\begin{proposition} \label{prop:wellbehavedinducespic}
Let $U$ and $V$ be smooth, geometrically connected curves over $K$,
and suppose that $\pi_1(V_{\overline{K}},\bar v)$ is non-trivial. 
Given a point-theoretic morphism 
\[
\psi  \colon  \pi_1(U, \bar u) \rightarrow \pi_1(V, \bar v)
\]
over $\Gal_K$, then there is a unique group homomorphism 
\[
\psi^{\ab}(\overline{K})   \colon   \Jac^\bullet_{U}(\overline{K}) \rightarrow \Jac^\bullet_{V}(\overline{K})
\]
which makes the diagram
\[
\begin{tikzcd}
U(\overline{K})  \arrow[r,"\psi(\overline{K})"] \arrow[d,"j_U"] & V(\overline{K}) \arrow[d,"j_V"] \\ 
 \Jac^\bullet_{U}(\overline{K})  \arrow[r,"\psi^{\ab}(\overline{K})"] & \Jac^\bullet_{V}(\overline{K})
\end{tikzcd}
\]
commutative.
\end{proposition}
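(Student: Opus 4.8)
The strategy is to transport, through the non-abelian Kummer maps of the generalized Picard schemes, the group homomorphism on section sets furnished by \cref{lem:effect on abelianized sections}. Throughout, $\kappa$ denotes the non-abelian Kummer map of whichever scheme its argument lives on. First I would collect the ingredients. Since $\pi_1(V_{\overline K},\bar v)$ is nontrivial and $K$ has characteristic $0$, the curve $V_{\overline K}$ is isomorphic to neither $\bP^1$ nor $\bA^1$, so the generalized Abel--Jacobi map $j_V\colon V\to\Jac^1_V$ is a locally closed immersion into the torsor $\Jac^1_V$ under the semi-abelian variety $\Jac^0_V$. Hence \cref{cor:pointtheoretic_actualmap} applies and produces $\psi(\overline K)\colon U(\overline K)\to V(\overline K)$, while \cref{lem:kummermapsubsemiabelian} shows that $\kappa\colon V(\overline K)\to\cS_{V/K}(\overline K)$ is injective with image the set of geometric quasi-sections; in particular $\psi(\overline K)$ is characterized by $\kappa(\psi(\overline K)(P))=\psi\circ\kappa(P)$. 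Next, \cref{lem:effect on abelianized sections} provides the group homomorphism $\psi^{\ab,\bullet}\colon\cS_{\Jac^\bullet_U/K}(\overline K)\to\cS_{\Jac^\bullet_V/K}(\overline K)$ fitting into the commutative square with $j_{U,\ast}$ and $j_{V,\ast}$, and \cref{recollection:albanese} records the Abel--Jacobi compatibility $\kappa\circ j_U=j_{U,\ast}\circ\kappa$ (and likewise for $V$). Finally, \cref{lemma:albanese} gives that $\kappa\colon\Jac^\bullet_U(\overline K)\to\cS_{\Jac^\bullet_U/K}(\overline K)$ and its analogue for $V$ are injective, and since $\Jac^\bullet_U$ is a commutative group scheme with group law the tensor product of line bundles (as in \cref{recollection:genjac}), these maps are group homomorphisms.

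The one input beyond the excerpt is that $\Jac^\bullet_U(\overline K)$ is generated as a group by $\{j_U(P):P\in U(\overline K)\}$; equivalently, $j_U$ extends to a surjection $Z_0(U_{\overline K})\twoheadrightarrow\Jac^\bullet_U(\overline K)$. This is the classical description of the generalized Jacobian going back to Serre: a line bundle on $X$ with a trivialization along $D$ becomes, after moving its divisor off $D$ and correcting the trivialization by a rational function that is a unit along $D$ with prescribed values there modulo a common scalar, of the form $\dO_X(E)$ with its canonical trivialization for a divisor $E$ supported on $U$. Granting this, for $P\in U(\overline K)$ the chain of equalities
\[
\psi^{\ab,\bullet}\big(\kappa(j_U(P))\big)=\psi^{\ab,\bullet}\big(j_{U,\ast}(\kappa(P))\big)=j_{V,\ast}\big(\psi\circ\kappa(P)\big)=j_{V,\ast}\big(\kappa(\psi(\overline K)(P))\big)=\kappa\big(j_V(\psi(\overline K)(P))\big)
\]
holds by, in order, Abel--Jacobi compatibility for $U$; the commutative square of \cref{lem:effect on abelianized sections}; point-theoreticity of $\psi$ together with the injectivity statement of \cref{lem:kummermapsubsemiabelian}, which identifies the geometric quasi-section $\psi\circ\kappa(P)$ with $\kappa(\psi(\overline K)(P))$; and Abel--Jacobi compatibility for $V$. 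Since $\psi^{\ab,\bullet}$ and the Kummer maps are group homomorphisms and the elements $\kappa(j_U(P))$ generate $\kappa(\Jac^\bullet_U(\overline K))$, it follows that $\psi^{\ab,\bullet}$ maps $\kappa(\Jac^\bullet_U(\overline K))$ into $\kappa(\Jac^\bullet_V(\overline K))$.

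I can therefore define $\psi^{\ab}(\overline K)\colonequals\kappa^{-1}\circ\psi^{\ab,\bullet}\circ\kappa\colon\Jac^\bullet_U(\overline K)\to\Jac^\bullet_V(\overline K)$, using injectivity of the Kummer map of $\Jac^\bullet_V$; it is a group homomorphism, and applying $\kappa^{-1}$ to the displayed chain yields $\psi^{\ab}(\overline K)(j_U(P))=j_V(\psi(\overline K)(P))$ for all $P\in U(\overline K)$, which is precisely the commutativity of the asserted square. For uniqueness, any group homomorphism making that square commute must send $j_U(P)$ to $j_V(\psi(\overline K)(P))$, and by the generation statement these elements exhaust a generating set of $\Jac^\bullet_U(\overline K)$. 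The step I expect to need the most care is this generation statement, whose proof rests on the moving lemma and on a weak-approximation property for values of rational functions along the finite set $D$; the remainder is diagram-chasing with the structures already set up in \cref{subseq:jacobian} and \cref{subseq:sectionskummer}. I would also double-check that the component-wise non-abelian Kummer maps of the disconnected group schemes $\Jac^\bullet_U,\Jac^\bullet_V$ are indeed homomorphisms for the tensor-product group law.
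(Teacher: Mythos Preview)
Your proof is correct and follows essentially the same route as the paper's. The paper organizes the same ingredients---\cref{cor:pointtheoretic_actualmap} for $\psi(\overline K)$, \cref{lem:effect on abelianized sections} for $\psi^{\ab,\bullet}$, surjectivity of $Z_0(U_{\overline K})\to\Jac^\bullet_U(\overline K)$ (citing Serre, Chapter~V, Proposition~3), and injectivity of $\kappa$ on $\Jac^\bullet_V(\overline K)$ from \cref{lemma:albanese}---into a single cube-shaped diagram and then does the diagram chase you spell out equationally; your caveats about the generation statement and about $\kappa$ being a group homomorphism are exactly the points the paper absorbs into \cref{recollection:genjac,recollection:albanese} without further comment.
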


\begin{proof}
From \cref{recollection:albanese} we know that the top and bottom parts commute in the diagram
\[
\xymatrix@M+0.5ex@R-2ex@C-1ex{
U(\overline{K}) \ar[dr]^{\kappa} \ar[rr] \ar[dd]^{\psi(\overline{K})} &&
Z_0(U_{\bar K}) \ar@{->>}[rr]^{j_U} \ar[dd]|!{[dl];[dr]}\hole^(0.65){\bZ[\psi(\overline{K})]} &&  
\Jac^\bullet_U(\overline K) \ar[dr]^{\kappa}  \ar@{.>}[dd]|!{[dl];[dr]}\hole^(0.65){\psi^{\ab}(\overline{K})}  & \\
& \cS_{U/K}(\overline{K})  \ar[rrrr]^{j_{U,\ast}}  \ar[dd]^(0.65){\psi} &&&& 
\cS_{\Jac^\bullet_U/K}(\overline{K})  \ar[dd]^{\psi^{\ab,\bullet}} \ 
\\
V(\overline{K}) \ar[dr]^{\kappa} \ar[rr]|!{[dr];[ur]}\hole &&
Z_0(V_{\bar K}) \ar[rr]^{j_V} &&  
\Jac^\bullet_V(\overline K)  \ar@{^(->}[dr]^{\kappa}  & \\
& \cS_{V/K}(\overline{K})  \ar[rrrr]^{j_{V,\ast}}  &&&& 
\cS_{\Jac^\bullet_V/K}(\overline{K}) \ .
}
\]
As $\pi_1(V_{\overline{K}})$ is non-trivial by assumption, the map $j_V \colon V \to \Jac^1_V$ is injective. Therefore, since $\psi$ is point-theoretic and $K$ is Kummer-faithful, \cref{cor:pointtheoretic_actualmap} provides the map $\psi(\overline{K})$ such that the left face commutes.
  
The map $\psi^{\ab,\bullet}$ was constructed in \cref{lem:effect on abelianized sections} such that the front face commutes.

The map $\bZ[\psi(\overline{K})]$ is the unique group homomorphism extending the map $\psi(\overline{K})$. 
By \cite[Chapter V, Proposition 3]{SerreClass}, the natural map 
$j_U \colon Z_0(U_{\overline{K}})  \rightarrow \Jac^\bullet_{U}(\overline{K}) $ is surjective. Furthermore, the bottom maps $\kappa$ on the right hand side is injective by \cref{lemma:albanese}. Thus a simple diagram chase, using that $Z_0(U_{\bar K})$ is the free abelian group on $U(\overline{K})$, shows the existence and uniqueness of the claimed homomorphism $\psi^{\ab}(\overline{K})$.
\end{proof}

We will now essentially follow the argument of  Creutz and Voloch \cite[Theorem 1.5]{CreutzVolochBrauer} 
to show that, any point-theoretic map $\psi \colon \pi_1(U, \bar u) \rightarrow \pi_1(V, \bar v)$
of hyperbolic curves is either open, or the image of $\psi$ is contained in a single decomposition group.

For a geometrically connected curve $U$ over $K$ with smooth projective compactification $X$, 
let $g_U$ be the (geometric) genus of $X$, and 
let $n_U  = \deg(X_{\overline{K}} \setminus U_{\overline{K}})$ the degree of the boundary divisor. 
Finally, set $\epsilon_U =1$ if $U$ is affine and $0$ otherwise. 
The $\ell$-adic Euler characteristic of $U_{\bar K}$ is
\[
\chi_U \coloneq \chi(U_{\bar K}, \bQ_\ell) = 2-2g_U-n_U.
\]
By  \cite[Chapter V,Theorem 1]{SerreClass}, the dimension of $\Jac^0_U$ is 
\begin{equation}
\label{eq:dimJac}
r_U \coloneq \dim \Jac^0_U = g_U+n_U - \ep_U = 1-\ep_U + \frac{1}{2}(n_U - \chi_U).
\end{equation}

\begin{proposition}	 \label{prop:eulerchar}
Let  $U$ and $V$ be geometrically connected smooth curves over a Kummer-faithful field $K$ of characteristic zero,  and assume that $\pi_1(V_{\overline{K}},\bar v)$ is non-trivial. Let  
\[
\psi \colon \pi_1(U,\bar u) \rightarrow \pi_1(V,\bar v)
\]
be a point-theoretic morphism over $\Gal_K$. 

Then, if $\chi_V \leq - 4 g_U  - 4 n_U$,  the image of  $\psi(\overline{K}) \colon U(\overline{K}) \rightarrow V(\overline{K})$ is finite.
\end{proposition}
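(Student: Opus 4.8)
The plan is to transport $\psi$ through the Abel--Jacobi maps, using \cref{prop:wellbehavedinducespic}, and then to read the hypothesis $\chi_V \leq -4g_U - 4n_U$ as the statement that $\Jac^0_U$ is simply too small for the curve $V$ to receive infinitely many distinct points from $U$. To begin, I would unwind the numerics: combining $\chi_V = 2 - 2g_V - n_V \leq -4g_U - 4n_U$ with \eqref{eq:dimJac} gives $2g_V + n_V \geq 2 + 4g_U + 4n_U$, hence $r_V = g_V + n_V - \ep_V \geq 2g_U + 2n_U$, and in fact $r_U < r_V$ in every case (the case $r_U = 0$ being trivial, since then $\Jac^0_U = 0$). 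Equivalently, $\operatorname{rk}_{\bZ_\ell} T_\ell \Jac^0_U = g_U + r_U$ is still strictly smaller than $r_V = \dim \Jac^0_V$; this is the inequality that must get contradicted whenever the image of $\psi(\overline{K})$ is infinite.

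Since $\pi_1(V_{\overline{K}}, \bar v)$ is non-trivial, the Abel--Jacobi map $j_V \colon V(\overline{K}) \hookrightarrow \Jac^1_V(\overline{K})$ is injective, and \cref{prop:wellbehavedinducespic} provides a group homomorphism $\psi^{\ab}(\overline{K}) \colon \Jac^\bullet_U(\overline{K}) \to \Jac^\bullet_V(\overline{K})$ with $j_V \circ \psi(\overline{K}) = \psi^{\ab}(\overline{K}) \circ j_U$. Let $S$ be the image of $\psi(\overline{K})$. By injectivity of $j_V$ we have $|S| = |\psi^{\ab}(\overline{K})(j_U(U(\overline{K})))|$, and since $\psi^{\ab}(\overline{K})$ preserves the degree (as one checks on the generating set $j_U(U(\overline{K}))$), any two elements of that set differ by an element of the divisible subgroup $\Gamma \coloneq \psi^{\ab}(\overline{K})(\Jac^0_U(\overline{K})) \subseteq \Jac^0_V(\overline{K})$. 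Thus $j_V(S)$ lies in a single coset $c + \Gamma$, and I would reduce the proposition to showing that $\Gamma$ is Zariski-dense in a \emph{proper} sub-semi-abelian variety $B' \subsetneq \Jac^0_V$. Granting this, $j_V^{-1}(c + B')$ is a closed subscheme of the curve $V$ which is not all of $V$ --- otherwise the differences $j_V(P) - j_V(Q)$, which generate $\Jac^0_V$ by \cite[Chapter~V, Proposition~3]{SerreClass}, would all lie in $B'$ --- hence finite; as $S \subseteq j_V^{-1}(c + \Gamma) \subseteq j_V^{-1}(c + B')(\overline{K})$, the set $S$ is then finite.

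To produce such a $B'$, I would use crucially that $\psi$ is point-theoretic over the Kummer-faithful field $K$. By the constructions of \cref{lem:effect on abelianized sections} and \cref{cor:pointtheoretic_actualmap}, the homomorphism $\psi^{\ab}(\overline{K})$ is compatible with the map induced on geometric abelianized fundamental groups by $\psi^{\ab,0}$, that is, with a $\Gal_K$-equivariant homomorphism of $\ell$-adic Tate modules $T_\ell \Jac^0_U \to T_\ell \Jac^0_V$; and because $\psi^{\ab}(\overline{K})$ sends Kummer classes of genuine points to Kummer classes of genuine points while $\kappa$ is injective on semi-abelian varieties over $K$ (\cref{lemma:albanese}), this should rigidify $\psi^{\ab,0}(\overline{K})$ enough that it is, up to isogeny, induced by an honest homomorphism of semi-abelian varieties $\Jac^0_U \to \Jac^0_V$. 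Its image is then a sub-semi-abelian variety $B' = \overline{\Gamma}^{\,\mathrm{Zar}}$ of dimension at most $\dim \Jac^0_U = r_U$, which by the first paragraph is $< r_V$; so $B' \subsetneq \Jac^0_V$, as required.

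The hard part will be exactly this rigidification step. Over a general Kummer-faithful (as opposed to sub-$p$-adic) field an abstract $\Gal_K$-equivariant homomorphism $T_\ell \Jac^0_U \to T_\ell \Jac^0_V$ need not be geometric, and without geometricity $\Gamma$ could a priori be Zariski-dense in the whole of $\Jac^0_V$ --- abstract-group-theoretically, and even after a naive rank count, since the free parts of $\Jac^0_U(\overline{K})$ and $\Jac^0_V(\overline{K})$ have the same cardinality. It is precisely here that point-theoreticity, the injectivity of $\kappa$ coming from Kummer-faithfulness, and the precise numerical shape of the hypothesis $\chi_V \leq -4g_U - 4n_U$ all have to be combined.
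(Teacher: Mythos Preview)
Your overall architecture is right: use \cref{prop:wellbehavedinducespic} to push $\psi(\overline{K})$ into the generalized Jacobians, and show that the image lands in (a translate of) a \emph{proper} algebraic subgroup $B' \subsetneq \Jac^0_{V}$, whence $j_V^{-1}(B')$ is finite. The gap is exactly where you locate it, but the rigidification route you propose is both unnecessary and, as you yourself note, unavailable over a general Kummer-faithful field: there is no reason for the set-theoretic group homomorphism $\psi^{\ab}(\overline{K})$ to come from a morphism of semi-abelian varieties, and trying to force this is what makes you think the step is hard.

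The paper bypasses this completely with a purely geometric observation. First, the numerics actually give the sharper inequality $2r_U < r_V$ (not just $r_U < r_V$); this is what the constant $4$ in the hypothesis buys. Now by \cite[Chapter~V, Theorem~1]{SerreClass} the summation map $U_{\bar K}^{\,r_U} \to \Jac^0_{U,\bar K}$ is dominant, and in a connected algebraic group any dense open generates the whole group with sums of length two; hence every element of $\Jac^0_U(\overline{K})$ is a sum of at most $2r_U$ elements of $j_{\bar u}(U(\overline{K}))$. Since $\psi^{\ab}(\overline{K})$ carries $j_{\bar u}(U(\overline{K}))$ into $j_{\bar v}(V(\overline{K}))$ by the very commutative square you wrote down, the entire group $\Gamma = \psi^{\ab}(\overline{K})\bigl(\Jac^0_U(\overline{K})\bigr)$ is contained in the image of the summation map $V_{\bar K}^{\,2r_U} \to \Jac^0_{V,\bar K}$. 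That image has dimension at most $2r_U < r_V$, so its Zariski closure is a proper closed subvariety; taking the Zariski closure of $\Gamma$ (which is a subgroup because $\Gamma$ is) gives the desired proper $B'$. No Tate modules, no geometricity of homomorphisms --- just dimension counting on honest varieties.
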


\begin{proof}
We use \eqref{eq:dimJac} to translate the numerical assumption into
\[
2r_U - r_V = 2(g_U+n_U - \ep_U)  + \frac{1}{2}( \chi_V -n_V) + \ep_V - 1  \leq - 2\ep_U - \frac{1}{2}n_V + \ep_V -1 < 0,
\]
so that the estimate $2 r_U < r_V$ holds.

We may replace the base points by $\bar u \in U(\overline{K})$ and $\bar v = \psi(\overline{K})(\bar u) \in V(\overline{K})$. Moreover, we consider the Abel-Jacobi maps with respect to these base points
\[
j_{\bar u} : U_{\bar K}  \to \Jac^0_{U,\bar K}, \quad z \mapsto j_U(z) - \bar u \quad \text{ and } \quad j_{\bar v} : V_{\bar K} \to \Jac^0_{V,\bar K}, \quad z \mapsto j_V(z) - \bar v .
\]
Then the diagram of \cref{prop:wellbehavedinducespic} gives rise to a commutative diagram
\[
\begin{tikzcd} 
U(\overline{K}) \arrow[r,"\psi(\overline{K})"] \arrow[d,"j_x"] & 
V(\overline{K}) \arrow[d,"j_y"] 
 \\  
 \Jac^0_{U}(\overline{K}) \arrow[r,"\psi^{\ab, 0}"] & 
 \Jac^0_{V}(\overline{K}) 
 \end{tikzcd}
\] 
where the map $\psi^{\ab,0}$ is a group homomorphism.
 
We further know by \cite[Chapter V,Theorem 1]{SerreClass}, that the natural map from the $r_U$-fold product
$U_{\bar K}^{r_U} \rightarrow \Jac^0_{U,\bar K}$ is dominant. Since in a connected algebraic group any dense open subset generates the whole group by a sum of two elements, this implies that any element of 
$\Jac^0_{U}(\overline{K})$ is a sum of at most $2r_U$ many elements in the image of $j_{\bar u}$. It follows that the image of $\psi^{\ab,0}$ is contained in the image of the natural map from the $2r_U$-fold product $V_{\bar K}^{2r_U} \rightarrow \Jac^0_{V,\bar K}$. By assumption and the calculation above, this map is not dominant. Thus the Zariski closure $B$ of $\psi^{\ab,0}( \Jac^0_{U}(\overline{K}))$ in $ \Jac^0_{V,\bar K}$ is a proper subgroup (note that $\psi^{\ab,0}$ is a group homomorphism and thus this Zariski closure is a group).

Now the image of $\psi(\overline{K}) \colon U(\overline{K}) \rightarrow V(\overline{K})$ is contained in $j_{\bar v}^{-1}(B)$. As $j_{\bar v}(V)$ generates $\Jac^0_{V,\bar K}$ and $B$ is a proper subgroup, the intersection $B \cap j_{\bar v}(V)$ must be finite, and so $\psi(\overline{K})$ has finite image.
\end{proof} 

\begin{proposition} \label{prop:hyperbolicopenpi1}
Let  $U$ and $V$ be geometrically connected smooth curves over a Kummer-faithful field of characteristic zero,  
and assume that $\pi_1(V_{\overline{K}},\bar v)$ is non-trivial. Let  
\[
\psi \colon \pi_1(U,\bar u) \rightarrow \pi_1(V,\bar v)
\]
be a point-theoretic morphism over $\Gal_K$. 

Then either $\psi$ is open, or the image of $\psi$ is contained in a single decomposition group and a fortiori $\psi(\overline{K})$ is constant.
\end{proposition}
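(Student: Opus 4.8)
Suppose $\psi$ is not open; the goal is to show that $\psi$ factors through $\Gal_K$ via the decomposition section of a point $y_0\in V(K)$, so that $\im(\psi)$ is a single decomposition subgroup (and a fortiori $\psi(\overline{K})$ is constant). Since $\psi$ is over $\Gal_K$ and $U$ is geometrically connected, $\im(\psi)$ surjects onto $\Gal_K$; hence "$\psi$ not open" is equivalent to "$\psi\bigl(\pi_1(U_{\overline{K}})\bigr)$ has infinite index in $\pi_1(V_{\overline{K}})$", and "$\im(\psi)$ contained in a decomposition subgroup" is equivalent to "$\psi|_{\overline{K}}$ is trivial and the induced section of $\pi_1(V)\to\Gal_K$ is that of a $K$-rational point". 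I first assume $V$ hyperbolic and $\pi_1(U_{\overline{K}})\neq 1$; the remaining cases ($\pi_1(U_{\overline{K}})=1$, and $\chi_V=0$, where covers do not change the Euler characteristic) are handled separately and more directly, the latter by reducing to the semi-abelian material of \cref{subseq:sectionskummer}. Now, because the closed subgroup $\im(\psi)$ is the intersection of the open subgroups $W\supseteq\im(\psi)$ — each surjecting onto $\Gal_K$, hence the $\pi_1$ of a geometrically connected finite étale cover of $V$ — and because $\im(\psi)$ has infinite index, a compactness argument yields such covers $V'\to V$ with $\deg(V'/V)$ arbitrarily large through which $\psi$ factors (so the pullback of $V'$ along $\psi$ is $U$ itself). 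The restriction $\psi\colon\pi_1(U)\to\pi_1(V')$ is still point-theoretic: an $L$-rational point of $U$ maps, after a finite base extension, to a point of $V$ whose quasi-section lies in $\pi_1(V')$ up to conjugacy, hence which lifts further to $V'$. Since $\chi_{V'}=\deg(V'/V)\cdot\chi_V$ while $g_U$ and $n_U$ are fixed, I choose $V'$ with $\chi_{V'}\leq-4g_U-4n_U$ and apply \cref{prop:eulerchar}: the image of $\psi(\overline{K})\colon U(\overline{K})\to V'(\overline{K})$, and hence of $\psi(\overline{K})\colon U(\overline{K})\to V(\overline{K})$, is finite.

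Finite image forces constancy. Fix $\bar u\in U(\overline{K})$, put $\bar v=\psi(\overline{K})(\bar u)$, and use the Abel–Jacobi embeddings $j_{\bar u},j_{\bar v}$ into $\Jac^0_U,\Jac^0_V$. By \cref{prop:wellbehavedinducespic}, $\psi^{\ab,0}(\overline{K})\colon\Jac^0_U(\overline{K})\to\Jac^0_V(\overline{K})$ is a group homomorphism whose image is generated by the finite set $\{\,j_{\bar v}(\psi(\overline{K})(P)):P\in U(\overline{K})\,\}$ (degree-zero divisors generate $\Jac^0_U(\overline{K})$, by \cite{SerreClass}). Being also divisible — a quotient of the divisible group $\Jac^0_U(\overline{K})$ — this image is trivial, as a finitely generated divisible abelian group vanishes. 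Since $j_{\bar v}$ is injective ($\pi_1(V_{\overline{K}})\neq 1$), $\psi(\overline{K})$ is constant with value $y_0:=\bar v$, and $y_0\in V(K)$ because $\psi(\overline{K})$ is $\Gal_K$-equivariant.

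Now bootstrap. Vanishing of $\psi^{\ab,0}(\overline{K})$ means $\psi^{\ab,0}\colon\pi_1(\Jac^0_U)\to\pi_1(\Jac^0_V)$ kills the geometric fundamental group (it reduces to $0$ on $n$-torsion for every $n$); via $\pi_1(j_V)\circ\psi=\psi^{\ab,1}\circ\pi_1(j_U)$ and the Katz–Lang identification of $\pi_1(j_V)$ with the projection of $\pi_1(V)$ onto its geometric abelianization (\cref{recollection:albanese}), this gives $\psi(\pi_1(U_{\overline{K}}))\subseteq\overline{[\pi_1(V_{\overline{K}}),\pi_1(V_{\overline{K}})]}$. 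Thus $\psi$ factors through a $K$-form of the first geometric commutator cover of $V$, itself a hyperbolic curve, and the previous two paragraphs may be re-run with $V$ replaced by this cover. Iterating along the terms $G_0\supseteq G_1\supseteq\cdots$ of the derived $\ell$-series of $\pi_1(V_{\overline{K}})$ for a fixed prime $\ell$ — which are characteristic, hence descend to $K$-forms of curves that are again hyperbolic with $\chi\to-\infty$ — shows $\psi(\pi_1(U_{\overline{K}}))\subseteq G_k$ for all $k$; since $\pi_1(V_{\overline{K}})$ is residually a finite $\ell$-group, $\psi|_{\overline{K}}$ is trivial. Hence $\psi$ factors as $\pi_1(U)\twoheadrightarrow\Gal_K\xrightarrow{\,s\,}\pi_1(V)$ with $s$ a section, and point-theoreticity forces the restriction of $s$ to arbitrarily small open subgroups to be the decomposition section of $y_0$; a descent using injectivity of the non-abelian Kummer map of $V$ over $K$ (\cref{lem:kummermapsubsemiabelian}) — the final appearance of Kummer-faithfulness — promotes this to $s=s_{y_0}$ up to $\pi_1(V_{\overline{K}})$-conjugacy. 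Therefore $\im(\psi)$ equals the decomposition subgroup of $y_0$, and in particular $\psi(\overline{K})$ is constant.

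The main obstacle is the bootstrap of the third paragraph: feeding "$\psi(\overline{K})$ constant" back into \cref{prop:eulerchar} repeatedly up the $\ell$-tower of covers of $V$ — carefully tracking the $K$-forms through which $\psi$ successively factors — to conclude that $\psi|_{\overline{K}}$ is trivial. This works precisely because the Euler-characteristic bound in \cref{prop:eulerchar} involves only $g_U,n_U$ on the source side (so the first pass succeeds for fixed $U$) and $\chi$ is multiplicative under covers (so the bound can be re-achieved at the next level). The closing step — that a point-theoretic section over a Kummer-faithful field which is geometric on a cofinal family of open subgroups is itself geometric — is delicate but of a classical flavour and rests on the injectivity of the Kummer map.
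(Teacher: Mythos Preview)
Your first two paragraphs --- finite image via \cref{prop:eulerchar} on a deep enough cover, then constancy via divisibility of $\Jac^0_U(\overline{K})$ --- are correct and coincide with the paper's argument. The divergence, and the gap, is in your bootstrap.

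You assert that vanishing of the point-map $\psi^{\ab,0}(\overline{K})\colon \Jac^0_U(\overline{K}) \to \Jac^0_V(\overline{K})$ forces the geometric part of $\psi^{\ab,0}\colon \pi_1(\Jac^0_U) \to \pi_1(\Jac^0_V)$ to vanish, with the justification ``it reduces to $0$ on $n$-torsion for every $n$''. This conflates two a priori different maps $\Jac^0_U[n] \to \Jac^0_V[n]$: the restriction of the point-map $\psi^{\ab,0}(\overline{K})$ to $n$-torsion points, and the reduction mod $n$ of the Tate-module map $f \colonequals \psi^{\ab,0}|_{\overline{K}}\colon T\Jac^0_U \to T\Jac^0_V$. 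By construction (see the diagram in the proof of \cref{prop:wellbehavedinducespic}), the value of $\psi^{\ab,0}(\overline{K})$ at a torsion point $P$ is read off from the class $f_\ast(\kappa_U(P))$ in $H^1(L,T\Jac^0_V)$, which depends on how $f$ acts on the full Kummer cocycle of $P$ --- not merely on $f \bmod n$. Vanishing of $f_\ast$ on the image of $\kappa_U$ gives no direct control over $f$ itself, so your derived-series iteration never gets started. (Your final step, promoting ``$s|_{\Gal_L}$ is the section of $y_0$ for cofinally many $L$'' to ``$s$ is the section of $y_0$ over $K$'', is also not covered by \cref{lem:kummermapsubsemiabelian}, which gives injectivity of $V(L)\to\cS_{V/K}(L)$, not of restriction $\cS_{V/K}(K)\to\cS_{V/K}(L)$; but this is moot given the earlier gap.)

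The paper completes the argument by a different route that sidesteps this issue entirely. After constancy of $\psi(\overline{K})$, it reruns the constancy argument for the induced map $\psi^{-1}(H) \to H$ for \emph{every} open subgroup $H \subset \pi_1(V)$ --- not only those containing $\im(\psi)$ --- to deduce that every decomposition subgroup $D_{\tilde x} \subset \pi_1(U)$ maps into one fixed decomposition subgroup $D_{\tilde v} \subset \pi_1(V)$. The closure $\Delta$ of the subgroup generated by all $D_{\tilde x}$ is normal in $\pi_1(U)$, so each $\psi(\gamma)$ normalizes the open subgroup $\psi(\Delta) \subset D_{\tilde v}$. The decisive external input is then Mochizuki's theorem \cite[Theorem~1.3]{MochizukiGalois} that the normalizer in $\pi_1(V)$ of any open subgroup of $D_{\tilde v}$ is contained in $D_{\tilde v}$; this forces $\im(\psi) \subset D_{\tilde v}$. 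That normalizer result is exactly what replaces your unproven passage from vanishing on points to vanishing on the Tate module.
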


\begin{proof}
    Let us suppose that $\psi$ is not open:
    then 
    \[
    \psi(\pi_1(U,\bar u)) = \bigcap_{i \in I} \pi_1(V_i,\bar v_i) \subseteq \pi_1(V,\bar v)
    \]
    for pointed finite \'etale coverings $V_i \rightarrow V$  with $V_i$ geometrically connected 
    the degree of $V_i \rightarrow V$ becomes arbitrarily large. Then, by the 
    Riemann--Hurwitz formula, the Euler characteristic 
    \[
    \chi_{V_i} = \deg(V_i \to V) \cdot \chi_V
    \]
    of the $V_i$ becomes an arbitrarily ``large'' negative number. 
    The induced map $\psi_i \colon \pi_1(U,\bar u) \rightarrow \pi_1(V_i,\bar v_i)$ is still point-theoretic. 
    Hence, by \cref{prop:eulerchar} the image of $\psi_i(\overline{K})$ is finite, 
    and thus also the image of $\psi(\overline{K})$ is finite.
     
     By choosing $\bar u$ and $\bar v$ as in the proof of \cref{prop:eulerchar} 
     we can view $V(\overline{K})$ as embedded in $\Jac^0_V(\overline{K})$. 
     From what we just showed, the subgroup generated
     by the image of $\psi(\overline{K}) $
     \[
     \langle \im \big(\psi(\overline{K})  \colon U(\overline{K}) \rightarrow 
     V(\overline{K}) \big)\rangle \subset \Jac^0_V(\overline{K})     
     \]
     is finitely generated.
     However, it is also a quotient of the divisible group $\Jac^0_U(\overline{K})$. 
     Thus, it must be trivial, which in turn implies that the image of $\psi(\overline{K})$ is just one point,
     namely $\bar v$. 
     
     For any open subgroup $H \subset \pi_1(V,\bar v)$ the same conclusion holds for 
     the induced map $f^{-1}(H) \rightarrow H$ interpreted as the point-theoretic map between 
     fundamental groups of respective finite \'etale covers. 
     This implies that any decomposition group $D_{\tilde{x}} \subset \pi_1(U,\bar u)$ has image 
     contained in the same, fixed decomposition group $D_{\tilde{v}}$ of $\pi_1(V,\bar v)$.
     
      Let $\Delta \subset \pi_1(U,\bar u)$ be the (closure) of the subgroup generated by all 
      decomposition groups of $\pi_1(U,\bar u)$. We see that 
     $\psi(\Delta)$ is open in $D_{\tilde{v}}$. Furthermore, for any $\gamma \in \pi_1(U,\bar u)$, we have  
     \[
     \psi(\gamma)\psi(\Delta)\psi(\gamma)^{-1} = \psi(\Delta).
     \]
      This implies that $\psi(\gamma)$ lies in the normalizer $N_{\pi_1(U,\bar u)}(\psi(\Delta))$ of the image $\psi(\Delta)$.  But, for any  open subgroup $D  \subset D_{\tilde{v}}$, it is known\footnote{Mochizuki proves the required claim for local fields, but the same proof goes through for Kummer-faithful fields.} that 
      \[
      N_{\pi_1(U,\bar u)}(D) \subset D_{\tilde{v}},
      \]
      see \cite[Theorem 1.3]{MochizukiGalois}. Thus, $\psi(\pi_1(U,\bar u))$ is contained in $D_{\tilde{v}}$, 
      and clearly $D_{\tilde{v}}$ is not open.
\end{proof}

\begin{proposition} \label{prop:hyperbolicopen}
Let $U$ and $V$ be geometrically connected smooth curves over a Kummer-faithful field.
Suppose $\ph \colon   U_{et} \rightarrow V_{et}$ is a pinned morphism of sites over $K_{\et}$ such that 
the map 
\[
\ph(\overline{K}) \colon U(\overline{K})\rightarrow V(\overline{K})
\]
is non-constant. Then the induced map 
\[
\ph_\ast \colon \pi_1(U,\bar u) \rightarrow \pi_1(V,\bar v)
\]
is open.
\end{proposition}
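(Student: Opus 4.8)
The plan is to deduce \cref{prop:hyperbolicopen} directly from \cref{prop:hyperbolicopenpi1} once we know that the two notions of ``non-constant'' agree — the one phrased via $\ph(\overline{K})$ as in \cref{rec:pinnedetale}, and the one phrased via the point-theoretic map $\ph_\ast(\overline{K})$ as in \cref{cor:pointtheoretic_actualmap}. First I would observe that if $\pi_1(V_{\overline{K}},\bar v)$ is trivial, then $V_{\overline{K}}$ is a curve with trivial geometric fundamental group, hence $V$ admits no non-constant map from $U$ on $\overline{K}$-points unless $V$ itself degenerates; more precisely $V(\overline{K})$ would then have to be a point up to the finitely many issues, and in any case $V$ is not hyperbolic, so $\ph(\overline{K})$ constant. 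Thus we may assume $\pi_1(V_{\overline{K}},\bar v)$ is non-trivial and \cref{prop:hyperbolicopenpi1} applies to $\psi = \ph_\ast$, which is point-theoretic by \cref{lemma:pinned_yields_pointtheoretic}.

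Next, the key identification: by \cref{prop:pinnedpoint}, applied with $Y = V$ and the injective map $j_V \colon V \to \Jac^1_V$ into the torsor $\Jac^1_V$ under the semiabelian variety $\Jac^0_V$ (which is injective precisely because $\pi_1(V_{\overline{K}},\bar v)$ is non-trivial), the two maps $\ph(\overline{K})$ and $\ph_\ast(\overline{K}) \colon U(\overline{K}) \to V(\overline{K})$ coincide. Therefore the hypothesis that $\ph(\overline{K})$ is non-constant translates into: $\ph_\ast(\overline{K}) = \psi(\overline{K})$ is non-constant. Now invoke \cref{prop:hyperbolicopenpi1}: since $\psi$ is point-theoretic over $\Gal_K$ with $\pi_1(V_{\overline{K}},\bar v)$ non-trivial, either $\psi$ is open, or the image of $\psi$ is contained in a single decomposition group, in which case the last clause of that proposition says $\psi(\overline{K})$ is constant. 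The second alternative is ruled out by our hypothesis, so $\psi = \ph_\ast$ is open, as claimed.

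I would also remark that the hypothesis in \cref{prop:hyperbolicopen} says ``pinned morphism of sites'' rather than of topoi; for the argument this is immaterial, since such a morphism of sites induces a pinned geometric morphism of the associated \'etale topoi and hence \cref{rec:pinnedetale}, \cref{lemma:pinned_yields_pointtheoretic}, and \cref{prop:pinnedpoint} all apply verbatim. One should double-check that the base point convention is consistent, i.e.\ that $\bar v = \bar u \circ \ph$ in the sense of \cref{prop:pinnedpoint}, so that the $\Gal_K$-equivariance of $\ph_\ast$ is the one needed for \cref{prop:hyperbolicopenpi1}; this is built into the construction of $\ph_\ast$ recalled in \cref{sec:topoi}.

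The only genuine subtlety — and the step I expect to require the most care — is the degenerate case $\pi_1(V_{\overline{K}},\bar v)$ trivial, together with confirming that ``hyperbolic'' is not literally assumed in \cref{prop:hyperbolicopen} (it says only ``smooth geometrically connected curves''). If $V$ is $\bP^1$ or an open subscheme with at most one or two points removed over $\overline{K}$, one must argue separately that a pinned $\ph$ with $\ph(\overline{K})$ non-constant still forces $\ph_\ast$ open — but here ``open'' is automatic or the situation does not occur, because a non-constant map to such a $V$ would again be detected on $\pi_1$ after passing to a suitable finite cover, or one simply notes these $V$ have small $\pi_1$ so the statement degenerates. In the intended application $V = \bP^1_K \setminus \{0,1,\infty\}$ is hyperbolic and $\pi_1(V_{\overline{K}},\bar v)$ is certainly non-trivial, so this corner case never interferes with the use of the proposition downstream.
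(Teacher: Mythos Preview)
Your core argument---applying \cref{prop:pinnedpoint} with $j_V\colon V\hookrightarrow \Jac^1_V$ to identify $\ph(\overline{K})$ with $\ph_\ast(\overline{K})$, and then invoking \cref{prop:hyperbolicopenpi1}---is correct and is exactly what the paper does in the case where $V$ is hyperbolic (equivalently, for this step, when $\pi_1(V_{\overline K})$ is non-trivial so that $j_V$ is injective).

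The genuine problem is your treatment of the degenerate case $\pi_1(V_{\overline K},\bar v)=1$. Your first paragraph claims that in this case $\ph(\overline{K})$ must be constant; that is false. Take $V=\bA^1_K$: the geometric fundamental group is trivial, yet $V(\overline{K})=\overline{K}$ is huge and a pinned morphism can certainly have non-constant $\ph(\overline{K})$. What \emph{is} true, and what you only half-articulate in your final paragraph, is that when $\pi_1(V_{\overline K})=1$ the projection $\pi_1(V,\bar v)\to\Gal_K$ is an isomorphism, so $\ph_\ast$ is forced to be surjective (hence open) simply because $U$ is geometrically connected. That one-line observation would complete your case split cleanly; as written, the first paragraph is wrong and the last paragraph does not commit to a correct statement.

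The paper sidesteps this case analysis entirely: rather than handling non-hyperbolic $V$ separately, it passes to a sufficiently small Zariski open $V'\subset V$ (automatically hyperbolic), sets $U'=\ph^\ast V'$, applies the hyperbolic argument to the restricted pinned morphism $\ph'\colon U'_{\et}\to V'_{\et}$, and then uses surjectivity of $\pi_1(U')\to\pi_1(U)$ and $\pi_1(V')\to\pi_1(V)$ to conclude that $\ph_\ast$ is open. This reduction is more robust and avoids the pitfall you fell into.
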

\begin{proof}
\cref{lemma:pinned_yields_pointtheoretic}  shows that $\ph_\ast \colon \pi_1(U,\bar u) \rightarrow \pi_1(V,\bar v)$ is point-theoretic. 
We first assume that $V$ is hyperbolic. Then $j_V: V \to \Jac^1_V$ is an embedding into a torsor under the semi-abelian variety $\Jac_V$, so that \cref{prop:pinnedpoint} shows that $\ph$ and $\ph_\ast$ induce the same map on $\overline{K}$-points. Therefore 
$\ph_\ast(\overline{K})$ is also non-constant. Hence   \cref{prop:hyperbolicopenpi1} applies and  concludes the proof.

In the general case, we may reduce to $V$ being hyperbolic as follows.
For a sufficiently small Zariski open $j' \colon  V' \to V$ we consider $j \colon  U' = \ph^{-1}(V') \to U$, which is also
Zariski open. The pinned map $\ph$ of topoi induces a pinned map $\ph' \colon U'_{\et} \to V'_{\et}$ of topoi over $K_{\et}$.
There is a commutative diagram 
\[
\begin{tikzcd} 
	\pi_1(U',\bar u') \arrow[r,"\ph'_\ast"]  \arrow[d, "j'_\ast"] 
	&  \pi_1(V',\bar v') \arrow[d, "j_\ast"]  \\ 
	\pi_1(U,\bar u) \arrow[r, "\ph_\ast"] 
	& \pi_1(V,\bar v) 
\end{tikzcd}
\]
with surjective vertical maps. The upper horizontal map is open by the hyperbolic case, and so is the lower map. 
\end{proof}

\section{\'Etale reconstruction for sub-\texorpdfstring{$p$}{p}-adic fields}
The category $\Sch_K^{\ft}$ of schemes of finite type over a field $K$ localized along the class of universal homeomorphisms is denoted by 
\[
\Sch_K^{\ft}[\UH^{-1}].
\]
We now prove \cref{intro_thm:etale_reconstruction}, the main result, stated again for the convenience of the reader.
\begin{theorem}
	\label{thm:etale_reconstruction}
	Let $ K $ be a sub-$p$-adic field.
	Then the functor
	\begin{equation*}
		(-)_{\et} \colon \Schft_{K}[\UH^{-1}] \longrightarrow \RToppin_{K}
	\end{equation*}
	sending an $X/K$ of finite type to its \'etale topos $\Xet \to K_{\et}$ is fully faithful.
\end{theorem}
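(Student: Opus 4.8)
The plan is to deduce \cref{thm:etale_reconstruction} from the curve-theoretic results of \cref{sec:pi1} together with Mochizuki's $\Hom$-theorem, after importing the reduction steps of \cite{CarlsonEtaleTopoi}. Since $K$ is sub-$p$-adic it has characteristic $0$ and is Kummer-faithful by \cref{rmk:sorite_Kummerfaithful}, so all of \cref{sec:pi1} applies over $K$. By \cite[Theorem 1.13]{CarlsonEtaleTopoi} we have $\Schft_K[\UH^{-1}] \simeq \Schsn_K$, and by \cite[Proposition 2.22]{CarlsonEtaleTopoi} every groupoid $\Hompin_K(\Xet,\Yet)$ is discrete, so \cref{thm:etale_reconstruction} is equivalent to \cref{intro_thm:etale_reconstructionB}. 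Invoking the reduction of \cite{CarlsonEtaleTopoi} recalled in the introduction -- Zariski descent along an affine cover of the seminormal source, topos-theoretic recovery of the ring of functions, and the resulting reduction of the target from an arbitrary finite type $K$-scheme to $\mathbb{G}_m$ and to the thrice-punctured projective line -- it suffices to show that for every smooth connected affine $K$-scheme $X$ the map
\[
	(-)_\et \colon \Hom_K(X,Y) \longrightarrow \Hompin_K(\Xet,\Yet)
\]
is bijective, where $Y \colonequals \mathbb{P}_K^1 \setminus \{0,1,\infty\}$ is a hyperbolic curve.

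For injectivity, if $f,g \colon X \to Y$ satisfy $f_\et \cong g_\et$, then $f_\et = g_\et$ as the target groupoid is discrete, hence $f$ and $g$ induce the same map on $\overline{K}$-points by \cref{rec:pinnedetale}; since $X$ is reduced of finite type over $K$ and $Y$ is separated, $f = g$. For surjectivity, fix a pinned morphism $\ph \colon \Xet \to \Yet$ over $K_\et$. Passing to the field of constants $K'$ of $X$ -- a finite, hence sub-$p$-adic and Kummer-faithful, extension of $K$ -- and using base change for \'etale topoi along $\Spec K' \to \Spec K$, we may assume $X$ geometrically connected over $K$; the scheme $Y$ is then still a geometrically connected hyperbolic curve over the new base field. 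By \cref{lemma:pinned_yields_pointtheoretic} the induced homomorphism $\ph_\ast \colon \pi_1(X,\bar x) \to \pi_1(Y,\bar y)$ is point-theoretic, and \cref{rec:pinnedetale} furnishes the map $\ph(\overline{K}) \colon X(\overline{K}) \to Y(\overline{K})$. We now distinguish cases according to whether $\ph_\ast$ has open image.

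If $\ph_\ast(\pi_1(X,\bar x))$ is open, then -- since $K$ is sub-$p$-adic, $X$ is smooth geometrically connected of finite type over $K$, and $Y$ is a hyperbolic curve -- Mochizuki's $\Hom$-theorem \cite[Theorem A]{MochizukiHom} produces a dominant morphism of schemes $f \colon X \to Y$ over $K$ with $f_\ast = \ph_\ast$ up to conjugation by $\pi_1(Y_{\overline{K}})$. Because $j_Y \colon Y \hookrightarrow \Jac^1_Y$ embeds $Y$ into a torsor under the semi-abelian variety $\Jac_Y$, \cref{prop:pinnedpoint} gives $\ph(\overline{K}) = \ph_\ast(\overline{K})$; functoriality of $\pi_1$ gives $f(\overline{K}) = f_\ast(\overline{K})$; and the map attached in \cref{cor:pointtheoretic_actualmap} to a point-theoretic homomorphism depends only on its $\pi_1(Y_{\overline{K}})$-conjugacy class, since quasi-sections are by definition conjugacy classes of sections. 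Hence $\ph$ and $f_\et$ are pinned morphisms into $\Yet$ that agree on $\overline{K}$-points, so $\ph = f_\et$ by the rigidity of pinned morphisms into the \'etale topos of a curve that underlies the reduction of \cite{CarlsonEtaleTopoi}; thus $\ph$ lies in the image of $(-)_\et$.

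If $\ph_\ast(\pi_1(X,\bar x))$ is not open, we claim $\ph(\overline{K})$ is constant. If not, choose $a,b \in X(\overline{K})$ with $\ph(\overline{K})(a) \neq \ph(\overline{K})(b)$. As $K$ is infinite and $X$ is smooth affine, a Bertini argument yields a smooth geometrically connected affine curve $C$ over $K$ together with a $K$-morphism $g \colon C \to X$ whose image meets $a$ and $b$ -- e.g. take $C$ to pass through the $K$-rational finite closed subscheme of $X$ supported on the Galois orbits of $a$ and $b$. Since $g$ is a morphism of finite type $K$-schemes, $g_\et$ is pinned, so $\ph \circ g_\et \colon C_\et \to \Yet$ is a pinned morphism over $K_\et$ that is non-constant on $\overline{K}$-points; by \cref{prop:hyperbolicopen} the induced map $\ph_\ast \circ g_\ast \colon \pi_1(C) \to \pi_1(Y)$ then has open image, and since $\ph_\ast g_\ast(\pi_1(C)) \subseteq \ph_\ast(\pi_1(X,\bar x))$ the latter would be open, a contradiction. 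Hence $\ph(\overline{K})$ is constant with value $y_0 \in Y(\overline{K})$, which lies in $Y(K)$ because $\ph(\overline{K})$ is $\Gal_K$-equivariant; the constant morphism $c_{y_0} \colon X \to \Spec K \xrightarrow{y_0} Y$ then agrees with $\ph$ on $\overline{K}$-points, so $\ph = c_{y_0,\et}$ by the same rigidity. In either case $\ph \in \im\big((-)_\et\big)$, proving surjectivity and hence the theorem. The main obstacle is exactly this end-game: reducing the ``open-or-constant'' dichotomy from the curve case of \cref{prop:hyperbolicopen} to a general smooth affine $X$ requires producing enough honest curves over $K$ through prescribed Galois-stable finite subsets of $X(\overline{K})$, and, in the open case, Mochizuki's theorem only yields an identification of the maps on fundamental groups, which must be upgraded to an equality of morphisms of topoi using \cref{prop:pinnedpoint}, the conjugacy-invariance of the $\overline{K}$-point map of \cref{cor:pointtheoretic_actualmap}, and the rigidity of pinned morphisms supplied by \cite{CarlsonEtaleTopoi}.
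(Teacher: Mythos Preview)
Your argument follows the same essential strategy as the paper: reduce via \cite{CarlsonEtaleTopoi} to a smooth connected source and the thrice-punctured line as target, then split on whether $\ph_\ast$ has open image, invoking Mochizuki's theorem in the open case and a curve-through-two-points argument (the paper's \cref{lem:hyperbolically copnnected}, citing \cite{CharlesBertini}, is your Bertini step) together with \cref{prop:hyperbolicopen} in the non-open case.

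The one substantive packaging difference is in the reduction step. The paper does not reduce to \emph{bijectivity} of $\Hom_K(X,\bP^1\setminus\{0,1,\infty\})\to\Hompin_K(\Xet,(\bP^1\setminus\{0,1,\infty\})_\et)$; rather, it invokes \cite[Theorem~4.18.1/5]{CarlsonEtaleTopoi} to reduce \cref{thm:etale_reconstruction} to the weaker criterion of \cref{prop:firstreductionThmA}: for every pinned $\ph\colon\Xet\to\bA^1_{K,\et}$ there exists $f\colon X\to\bA^1_K$ with merely $f(\overline K)=\ph(\overline K)$. It then covers $\bA^1_K$ by two Zariski-open copies of $\bP^1\setminus\{0,1,\infty\}$ (\cref{lemma:hyperbolicreduction}) to reach \cref{prop:ThmAinputMochizukiHom}. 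Because the external criterion only demands agreement on $\overline K$-points, the paper never needs to promote $f(\overline K)=\ph(\overline K)$ to $f_\et=\ph$. You, by contrast, must make that promotion, and you do so by appealing to an unnamed ``rigidity of pinned morphisms'' from \cite{CarlsonEtaleTopoi}; this is precisely the content that \cite[Theorem~4.18.1/5]{CarlsonEtaleTopoi} encapsulates, so you should cite that theorem directly rather than an unspecified rigidity principle. A secondary difference: the paper handles the non-geometrically-connected case by base-changing to a finite Galois extension and descending, whereas you pass to the field of constants; both routes work.
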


Recall that the inclusion of the category of seminormal schemes of finite type over $K$ into $\Sch_K^{\ft}$ has a right-adjoint, the semi-nomalization. This functor seminormalization realizes the localization functor 
\[
\Sch_K^{\ft} \longrightarrow \Sch_K^{\ft}[\UH^{-1}]
\]
since $K$ has characteristic $0$, see \cite[Corollary 1.15.1]{CarlsonEtaleTopoi}. Semi-normalization agrees with absolute weak normalization in characteristic $0$, therefore \cite[Theorem 4.18.1/5]{CarlsonEtaleTopoi} reduces the proof of  \cref{thm:etale_reconstruction} to the following statement (we omit the assumption of a $K$-rational point, because we do not need it).

\begin{proposition} \label{prop:firstreductionThmA}
Let $ K $ be a sub-$p$-adic field, and let $X$ be a smooth connected scheme of finite type over $K$. Then for any pinned morphism $\ph \colon \Xet \to \bA^1_{K,\et}$  over $K_{\et}$ there is a map $f: X \to \bA^1_K$ over $K$ such that 
\[
f (\overline{K}) \colon X(\overline{K}) \to \bA^1_K(\overline{K}) = \overline{K}
\]
agrees with the map $\ph(\overline{K})$.
\end{proposition}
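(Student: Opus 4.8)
The plan is to reduce the target $\bA^1_K$ to hyperbolic thrice-punctured lines, invoke Mochizuki's Hom-theorem there, and then glue. First I would dispose of the case where $\ph(\overline K)$ is constant: its value $c\in\bA^1_K(\overline K)=\overline K$ is then fixed by $\Gal_K$, since the map $\ph(\overline K)$ of \cref{rec:pinnedetale} is $\Gal_K$-equivariant and $X(\overline K)\neq\emptyset$, so $c\in K$ and the constant morphism $f\equiv c$ works. From now on $\ph(\overline K)$ is non-constant.

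Next I would fix three distinct $K$-rational points $c_0,c_1,c_2$ of $\bA^1_K$ (available since $K$ has characteristic $0$) and set $T_{ij}=\bA^1_K\setminus\{c_i,c_j\}$, a geometrically connected hyperbolic curve, for $\{i,j\}\subset\{0,1,2\}$. Since $\bA^1_K=T_{01}\cup T_{02}\cup T_{12}$ and $\ph^\ast$ preserves the terminal object, monomorphisms, and suprema of subobjects, \cref{rec:pinnedetale} (through \cite[Theorem A.8]{CarlsonEtaleTopoi}) exhibits $U_{ij}:=\ph^{-1}(T_{ij})$ as an open subscheme of $X$, shows the $U_{ij}$ cover $X$, and restricts $\ph$ to pinned morphisms $\ph_{ij}\colon(U_{ij})_{\et}\to(T_{ij})_{\et}$ over $K_{\et}$ inducing on $\overline K$-points the restrictions of $\ph(\overline K)$. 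By the gluing step at the end it then suffices to produce, for each $\{i,j\}$, a $K$-morphism $f_{ij}\colon U_{ij}\to\bA^1_K$ with $f_{ij}(\overline K)=\ph(\overline K)|_{U_{ij}}$. Passing to connected components, and on each to the finite (hence still sub-$p$-adic, by \cref{rmk:sorite_Kummerfaithful}) extension of $K$ given by the field of constants, I may assume $U:=U_{ij}$ is smooth and geometrically connected; write $T:=T_{ij}$ and $\ph_0:=\ph_{ij}$. If $\ph_0(\overline K)$ is constant its value lies in $K$ as above and a constant morphism works, so assume it is non-constant.

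The crux is then to show that $\ph_{0,\ast}\colon\pi_1(U,\bar u)\to\pi_1(T,\bar t)$ is open; it is point-theoretic by \cref{lemma:pinned_yields_pointtheoretic}. If $\dim U=1$ this is precisely \cref{prop:hyperbolicopen}. If $\dim U\geq 2$, I would choose $a,b\in U(\overline K)$ with $\ph_0(\overline K)(a)\neq\ph_0(\overline K)(b)$ and, after a finite extension $L/K$ (harmless, since openness of $\ph_{0,\ast}$ can be tested on the open subgroup $\pi_1(U_L)$), a smooth geometrically connected hyperbolic curve $C\hookrightarrow U_L$ through $a$ and $b$ — e.g. a general complete-intersection curve of large degree, which by Bertini is smooth and geometrically connected and, by adjunction, has arbitrarily negative Euler characteristic. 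The composite $\pi_1(C)\to\pi_1(U)\xrightarrow{\ph_{0,\ast}}\pi_1(T)$ is again point-theoretic (being $\ph_{0,\ast}$ precomposed with the map induced by a morphism of schemes), and its induced map on $\overline K$-points is $C(\overline K)\to U(\overline K)\xrightarrow{\ph_0(\overline K)}T(\overline K)$, which is non-constant since $a,b\in C(\overline K)$; hence \cref{prop:hyperbolicopenpi1} forces this composite open, so $\ph_{0,\ast}(\pi_1(U))$ contains an open subgroup of $\pi_1(T)$ and $\ph_{0,\ast}$ is open. Now $U$ is a smooth geometrically connected variety over the sub-$p$-adic field $K$ and $T$ a hyperbolic curve, so Mochizuki's Hom-theorem \cite[Theorem A]{MochizukiHom} provides a dominant $K$-morphism $g\colon U\to T$ with $\pi_1(g)=\ph_{0,\ast}$ up to $\pi_1(T_{\overline K})$-conjugacy, and I set $f_{ij}=(T\hookrightarrow\bA^1_K)\circ g$. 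For $a\in U(\overline K)$ the quasi-sections $\pi_1(g(a))=\pi_1(g)\circ\pi_1(a)$ and $\ph_{0,\ast}\circ\pi_1(a)=\pi_1(\ph_0(\overline K)(a))$ (the latter by \cref{rec:pinnedetale}) coincide, since $\pi_1(T_{\overline K})$-conjugation does not change a quasi-section; as the non-abelian Kummer map of $T$ is injective by \cref{lem:kummermapsubsemiabelian} (because $T$ embeds into the semi-abelian torsor $\Jac^1_T$, $T$ being hyperbolic), this yields $g(a)=\ph_0(\overline K)(a)$, i.e. $f_{ij}(\overline K)=\ph(\overline K)|_{U_{ij}}$.

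Finally I would glue: on each overlap $U_{ij}\cap U_{kl}$ the morphisms $f_{ij}$ and $f_{kl}$ induce the same map on $\overline K$-points, and a morphism from a reduced finite-type $K$-scheme to the separated scheme $\bA^1_K$ is determined by its values on the (Zariski-dense) set of $\overline K$-points, so the $f_{ij}$ glue to the desired $f\colon X\to\bA^1_K$ with $f(\overline K)=\ph(\overline K)$. I expect the main obstacle to be the dichotomy ``$\ph_0(\overline K)$ constant or $\ph_{0,\ast}$ open'': its curve case is \cref{prop:hyperbolicopenpi1} (which itself rests on Mochizuki's theorem on normalizers of decomposition groups), and the reduction of the higher-dimensional case to curves by Bertini — together with the bookkeeping for the auxiliary finite extensions and for non-geometrically-connected components — is where I anticipate the real work lies; by contrast, once $\ph_{0,\ast}$ is known to be open, the identification of the resulting morphism with $\ph(\overline K)$ is a formal consequence of Kummer-faithfulness.
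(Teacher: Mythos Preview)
Your proof is correct and follows essentially the same route as the paper (\cref{lemma:hyperbolicreduction} and \cref{prop:ThmAinputMochizukiHom}): cover $\bA^1_K$ by hyperbolic opens, establish the dichotomy ``$\ph(\overline K)$ constant or $\ph_\ast$ open'' by passing to curves inside the source via a Bertini-type result (the paper's \cref{lem:hyperbolically copnnected}), and invoke Mochizuki's Hom-theorem in the open case. The differences are purely organizational: the paper uses two opens rather than three, argues the dichotomy contrapositively (assuming $\ph_\ast$ not open and showing every curve in $X$ maps constantly, hence $\ph(\overline K)$ is constant), and handles the non-geometrically-connected case by Galois descent at the end rather than by passing to the field of constants component-by-component as you do.
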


We now replace $\bA^1_K$ by the hyperbolic curve $\bP^1_K \setminus \{0,1,\infty\}$ in \cref{prop:firstreductionThmA}.

\begin{lemma} \label{lemma:hyperbolicreduction}
Let $K$ be a sub-$p$-adic field.  \cref{thm:etale_reconstruction} follows from \cref{prop:ThmAinputMochizukiHom} below.
\end{lemma}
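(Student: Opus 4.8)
The plan is to deduce \cref{prop:firstreductionThmA} from \cref{prop:ThmAinputMochizukiHom}, the analogous statement with the target $\bA^1_K$ replaced by the thrice‑punctured line $\bP^1_K\setminus\{0,1,\infty\}$; since \cref{thm:etale_reconstruction} has already been reduced to \cref{prop:firstreductionThmA}, this finishes the proof. Two ingredients enter: a Zariski‑local gluing principle for the conclusion, and the elementary observation that $\bA^1_K$ with two $K$‑rational points removed is isomorphic to $\bP^1_K\setminus\{0,1,\infty\}$.

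First I would record the gluing principle. Let $Y/K$ be separated of finite type, let $X/K$ be smooth and connected — hence irreducible, $K$ being of characteristic $0$ — and let $\ph\colon X_{\et}\to Y_{\et}$ be pinned. If $X=\bigcup_i X_i$ is a Zariski open cover by connected opens such that each composite $(X_i)_{\et}\to X_{\et}\xrightarrow{\ph}Y_{\et}$ — still pinned, as the closed points of the open $X_i$ are exactly the closed points of $X$ lying in it — admits a morphism $f_i\colon X_i\to Y$ over $K$ inducing it on $\overline K$‑points, then the $f_i$ glue to a morphism $f\colon X\to Y$ over $K$ with $f(\overline K)=\ph(\overline K)$: on an overlap $X_i\cap X_j$ the morphisms $f_i$ and $f_j$ to $Y$ agree on the Zariski‑dense set of $\overline K$‑points of the reduced scheme $X_i\cap X_j$, hence coincide because $Y$ is separated. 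In particular this lets us apply \cref{prop:ThmAinputMochizukiHom} to any smooth connected $X/K$, by covering by affine opens, passing to connected components (which remain affine, smooth and connected), solving, and gluing.

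Next I would reduce the target. Fix distinct $a,b\in K$; infinitely many such pairs exist since $\bQ\subseteq K$. The open immersion $\bA^1_K\setminus\{a,b\}\hookrightarrow\bA^1_K$ is quasi‑compact, separated and étale, so by \cref{rec:pinnedetale} the pullback $U_{a,b}\colonequals\ph^\ast(\bA^1_K\setminus\{a,b\})$ is represented by a quasi‑compact separated étale $X$‑scheme; being a pullback of a monomorphism it is a monomorphism, so $U_{a,b}\hookrightarrow X$ is in fact a Zariski open immersion, and $\ph$ restricts to a pinned morphism $(U_{a,b})_{\et}\to(\bA^1_K\setminus\{a,b\})_{\et}$. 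The scheme $U_{a,b}$ is smooth over $K$ and, when non‑empty, connected, being a non‑empty open of the irreducible $X$. Composing with the $K$‑affine‑linear isomorphism $t\mapsto(t-a)/(b-a)$, which identifies $\bA^1_K\setminus\{a,b\}$ with $\bP^1_K\setminus\{0,1,\infty\}$, and invoking \cref{prop:ThmAinputMochizukiHom} in the form provided by the gluing principle, we obtain a morphism $f_{a,b}\colon U_{a,b}\to\bA^1_K\setminus\{a,b\}\hookrightarrow\bA^1_K$ over $K$ whose effect on $\overline K$‑points is the restriction of $\ph(\overline K)$ to $U_{a,b}(\overline K)$.

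Finally I would glue over the pairs. By \cref{rec:pinnedetale}, a point $x\in X(\overline K)$ factors through $U_{a,b}$ exactly when $\ph\circ x_{\et}=(\ph(\overline K)(x))_{\et}$ factors through $(\bA^1_K\setminus\{a,b\})_{\et}$, i.e. when $\ph(\overline K)(x)\notin\{a,b\}$; thus $U_{a,b}(\overline K)=\{x\in X(\overline K):\ph(\overline K)(x)\notin\{a,b\}\}$. For any given $x$ at most one element of $K$ is excluded, so an admissible pair $a\neq b$ exists, whence $\bigcup_{a\neq b}U_{a,b}(\overline K)=X(\overline K)$; as $X$ is Jacobson this forces $\bigcup_{a\neq b}U_{a,b}=X$. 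The $f_{a,b}$ agree on overlaps (again by separatedness of $\bA^1_K$ and density of $\overline K$‑points), so by the gluing principle they patch to a morphism $f\colon X\to\bA^1_K$ over $K$ with $f(\overline K)=\ph(\overline K)$, which is \cref{prop:firstreductionThmA}. The substance lies entirely in \cref{prop:ThmAinputMochizukiHom} and in the structural results of \cite{CarlsonEtaleTopoi} recalled in \cref{rec:pinnedetale}; here the only points needing care are that $\ph^\ast(\bA^1_K\setminus\{a,b\})$ is an honest Zariski open of $X$ with pinned restriction and the expected $\overline K$‑points, and that these opens cover $X$, so I expect no serious obstacle in this lemma — the difficulty having been deliberately pushed into \cref{prop:ThmAinputMochizukiHom}.
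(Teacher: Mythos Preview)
Your proof is correct and follows the same strategy as the paper: cover $\bA^1_K$ by Zariski opens isomorphic to $\bP^1_K\setminus\{0,1,\infty\}$, pull back along $\ph$, apply \cref{prop:ThmAinputMochizukiHom} on each piece, and glue. The paper is more economical, using only the two opens $\bA^1_K\setminus\{0,1\}$ and $\bA^1_K\setminus\{2,3\}$ (whose union is already $\bA^1_K$, so $\ph^\ast$ of them covers $X$ because $\ph^\ast$ preserves finite unions), which spares your $\overline K$-points-plus-Jacobson argument; your detour through an affine cover is also unnecessary, since \cref{prop:ThmAinputMochizukiHom} carries no affineness hypothesis.
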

\begin{proof}
Suppose given a  pinned morphism $\ph \colon  X_{\et} \rightarrow \bA^1_{K,\et}$. 
We cover $\bA^1_K$ by two Zariski opens $U_1 = \bA^1_K \setminus \{0,1\}$ and $U_2 = \bA^1_K \setminus \{2,3\}$, both of which are isomorphic to $\bP^1_K \setminus \{0,1,\infty\}$. The preimages $X_i = \ph^\ast(U_i)$ form a Zariski open cover $X = X_1 \cup X_2$ of $X$ and $\ph$ restricts to pinned morphisms $\ph_i : X_{i,\et} \to U_{i,\et}$. By assumption,  \cref{prop:ThmAinputMochizukiHom} yields morphisms $f_i : X_i \to U_i$ over $K$ that agree with $\ph_i(\overline{K})$ on $\overline{K}$-points. It follows that $f_1$ and $f_2$ agree on $X_1 \cap X_2$ and thus glue to a map $f: X \to \bA^1_K$ with $f(\overline{K}) = \ph(\overline{K})$. This verifies the criterion for  \cref{thm:etale_reconstruction} provided by \cref{prop:firstreductionThmA}.
\end{proof}

\begin{lemma} \label{lem:hyperbolically copnnected}
Let $X$ be a smooth geometrically connected scheme of finite type over the field $K$. On $X(\overline{K})$ we consider the equivalence relation generated by pairs of points being equivalent if there is a geometrically connected smooth curve $C$ over $K$ and a map $g: C \to X$ over $K$ with $x,y \in g(C(\overline{K}))$. Then any two points are equivalent. 
\end{lemma}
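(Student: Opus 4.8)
The plan is to exhibit, for any two $\overline{K}$-points $x$ and $y$ of $X$, a chain of geometrically connected smooth curves over $K$ mapping to $X$ and linking $x$ to $y$. The natural tool is a Bertini-type argument: slicing $X$ by generic hyperplanes produces smooth geometrically connected curves, and we exploit that $X$ is connected to propagate the relation across $X$.

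First I would reduce to the case where $X$ is affine. Indeed, $X$ admits a finite Zariski open cover by affines $X_j$, each smooth and of finite type over $K$; since $X$ is connected, any two of these affines are linked by a chain of $X_j$'s with nonempty pairwise intersections, and a point in such an intersection lies in both. So it suffices to treat the affine case and show that any two points of an affine $X_j$ are connected through curves in $X_j$ (note that a curve in $X_j$ is in particular a curve in $X$, so the equivalence relation on $X(\overline K)$ restricts appropriately). After passing to affines, embed $X \hookrightarrow \mathbb{A}^N_K$ as a locally closed subscheme, with $d = \dim X$.

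The heart of the argument is then: given $x, y \in X(\overline{K})$, find a geometrically connected smooth curve $C/K$ with a map $C \to X$ whose image contains both $x$ and $y$. If $d = 1$ we may take $C = X$ (the geometrically connected components of a smooth connected $K$-variety are permuted transitively by $\Gal_K$, but for a curve we can instead shrink to a geometrically connected smooth curve containing $x$ — more carefully, $x$ and $y$ might lie on different geometric components, so one should first handle the reduction that $X$ is geometrically connected: replace the equivalence relation argument by noting that over the finite extension where the components are defined they are $\Gamma$-conjugate, and a curve over $K$ mapping to $X$ and hitting a $\overline K$-point in one component can be chosen — this is the delicate point, see below). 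For $d \geq 2$, I would slice: choosing a generic affine-linear subspace $L \subseteq \mathbb{A}^N$ of codimension $d-1$ through both $x$ and $y$, the intersection $X \cap L$ is, by a Bertini-over-general-fields argument (the general member of a linear system base-point-free away from finitely many points is smooth and geometrically irreducible when $\dim \geq 1$, using e.g.\ the Bertini theorems valid over infinite fields — and $K$ is infinite, being sub-$p$-adic), a smooth geometrically connected curve defined over a finite extension $K'/K$; but we need it over $K$. To get a curve \emph{over $K$}, I would instead argue by induction on $d$: a general hyperplane section $X \cap H$ with $H$ defined over $K$ is smooth, geometrically connected (Bertini, since $\dim X \geq 2$), of dimension $d-1$; a pencil of such hyperplanes through a general line lets one move $x$ to a point of a fixed general section, and by induction that section connects all its points — chaining these gives the result.

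The main obstacle I anticipate is the bookkeeping around \emph{geometric} connectedness of the Bertini slices together with keeping everything defined \emph{over $K$} (not merely over $\overline{K}$ or a finite extension), since a naive generic slice through two prescribed points $x,y$ is only defined over the compositum of their residue fields. The clean way around this is the inductive pencil argument: at each stage one only needs a hyperplane section that is smooth and geometrically connected — a generic $K$-rational condition, satisfiable since $K$ is infinite — and one moves the given points into a common such section by varying within a pencil (a $\mathbb{P}^1$ of hyperplanes, again over $K$), invoking that a generic member of the pencil restricted to the relevant curve is a connected smooth curve hitting the moved point. The other, more subtle, obstacle is the initial reduction to $X$ geometrically connected: if $X$ is connected but not geometrically connected, its geometric components are transitively permuted by $\Gal_K$, and one must link points in different components by a curve over $K$; this is handled by the same slicing, since a general curve slice $C \subseteq X$ over $K$ need not be geometrically connected either, but its own geometric components are $\Gal_K$-conjugate and $C(\overline K)$ surjects onto enough of $X(\overline K)$ — concretely, one can first choose $C$ so that $C$ is connected and then observe that, because the monodromy permuting geometric components of $C$ maps onto that of $X$ (general slice principle), points of $X$ in distinct geometric components are hit by points of $C$ in distinct geometric components of $C$, and then one applies the curve case. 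I would make this precise by induction on $\dim X$, the base case $\dim X \le 1$ being where one directly checks that a connected smooth curve over $K$ has the property that any two of its $\overline K$-points are "equivalent" in the stated sense — taking $C = X$ itself and $g = \mathrm{id}$, so that \emph{all} of $X(\overline K)$ lies in one equivalence class trivially, regardless of whether $X$ is geometrically connected.
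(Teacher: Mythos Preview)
Your proposal correctly identifies Bertini-type arguments as the tool and correctly pinpoints the central difficulty: producing a curve \emph{over $K$} through prescribed $\overline{K}$-points. But your proposed resolution via pencils does not close this gap. In a $K$-rational pencil of hyperplanes, the member $H_t$ containing $x$ is defined only over $K(t)$, so neither $X \cap H_t$ nor any curve inside it can serve as your $C$ over $K$; routing through the ($K$-rational) base locus does not help, since connecting $x$ to a point of the base locus is again the problem of finding a $K$-curve through $x$. Separately, your extended discussion of the case where $X$ or an affine piece fails to be geometrically connected is unnecessary: by hypothesis $X$ is smooth and geometrically connected, hence geometrically irreducible, so every nonempty open subset is automatically geometrically connected as well, and the base case $d=1$ is genuinely trivial with $C=X$.

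The paper's proof is one step and avoids induction, affine reduction, and pencils entirely. Given $x, y \in X(\overline{K})$, the union of their Galois orbits is $Z(\overline{K})$ for a $0$-cycle $Z \subseteq X$ defined over $K$. The Bertini irreducibility theorem of Charles--Poonen then furnishes a geometrically irreducible curve $C' \subseteq X$, defined over $K$, passing through $Z$; its normalization $C$ is smooth and geometrically connected over $K$, and the composite $g \colon C \to C' \hookrightarrow X$ satisfies $x, y \in g(C(\overline{K}))$. The idea you are missing is precisely this first move: replace $\{x,y\}$ by its Galois orbit, a $K$-rational $0$-cycle, so that one can ask for a $K$-rational curve through it. An inductive hyperplane-section argument in your spirit can be made to work, but only after this same Galois-orbit step together with a Veronese re-embedding ensuring enough $K$-rational hyperplanes through $Z$ --- at which point one is essentially reproving the cited Charles--Poonen result by hand.
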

\begin{proof}
In order to connect $x,y \in X(\overline{K})$ we consider their Galois orbits, which is $Z(\overline{K})$ for a $0$-cycle $Z \subseteq X$. 
By \cite[Corollary~1.9]{CharlesBertini} there is a geometrically irreducible curve $C' \subseteq X$ that passes through $Z$. Let $C \to C$ be the normalization. Then $C$ is smooth and geometrically connected over $K$, and 
$x$ and $y$ are in $g(C(\overline{K}))$ for $g: C \to C' \to X$.
\end{proof}

\begin{proposition} \label{prop:ThmAinputMochizukiHom}
Let $ K $ be a sub-$p$-adic field, and let $X$ be a smooth connected scheme of finite type over $K$. Then for any pinned morphism $\ph \colon \Xet \to (\bP^1_K \setminus \{0,1,\infty\})_{\et}$  over $K_{\et}$ there is a map $f: X \to \bP^1_K \setminus \{0,1,\infty\}$ over $K$ such that 
\[
f (\overline{K}) \colon X(\overline{K}) \to \bP^1_K \setminus \{0,1,\infty\}(\overline{K})
\]
agrees with the map $\ph(\overline{K})$.
\end{proposition}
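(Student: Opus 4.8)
The plan is to combine the ``constant or open'' dichotomy for curves from \cref{sec:pi1} with the chaining lemma \cref{lem:hyperbolically copnnected}, splitting into two cases, and to feed the open case into Mochizuki's $\Hom$-theorem. Set $V = \bP^1_K \setminus \{0,1,\infty\}$, a geometrically connected hyperbolic curve over the sub-$p$-adic (hence Kummer-faithful, characteristic $0$) field $K$; by \cref{recollection:albanese} the Abel--Jacobi map $j_V \colon V \hookrightarrow \Jac^1_V$ is an injective map into a torsor under the semi-abelian variety $\Jac_V$. I would first reduce to $X$ geometrically connected over $K$ (if not, replace $K$ by the field of constants $K'$ of $X$, which is again sub-$p$-adic, and $\ph$ by its base change to a pinned morphism $X_{\et} \to (V \times_K K')_{\et}$ over $K'_{\et}$; a $K'$-morphism $X \to V \times_K K'$ with the right effect on $\overline K$-points composes with the projection to give the desired $f$). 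Now fix a geometric point $\bar x$ of $X$ compatible with $\overline K$, set $\bar v = \bar x \circ \ph$, and form $\ph_\ast \colon \pi_1(X,\bar x) \to \pi_1(V,\bar v)$; by \cref{lemma:pinned_yields_pointtheoretic} it is point-theoretic, and since $V$ embeds in the semi-abelian torsor $\Jac^1_V$, \cref{prop:pinnedpoint} gives $\ph(\overline K) = \ph_\ast(\overline K)$ as maps $X(\overline K) \to V(\overline K)$.

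For each smooth geometrically connected curve $C/K$ equipped with a $K$-morphism $g \colon C \to X$, the map $g_{\et}$ is pinned (a morphism of finite type $K$-schemes carries closed points to closed points), so $\ph \circ g_{\et} \colon C_{\et} \to V_{\et}$ is pinned and \cref{prop:hyperbolicopen} applies: either $\ph(\overline K) \circ g(\overline K)$ is constant, or $\ph_\ast \circ g_\ast \colon \pi_1(C) \to \pi_1(V)$ is open; in the latter case $\im(\ph_\ast)$ is a closed subgroup containing an open one, hence $\ph_\ast$ is open. So I would argue by cases. If $\ph_\ast$ is \emph{not} open, then $\ph(\overline K) \circ g(\overline K)$ is constant for every such $g$; since \cref{lem:hyperbolically copnnected} joins any two points of $X(\overline K)$ by a chain of such curves, $\ph(\overline K)$ is constant with value some $v_0 \in V(\overline K)$, which is $\Gal_K$-fixed because $\ph(\overline K)$ is $\Gal_K$-equivariant ($\ph$ lies over $K_{\et}$), so $v_0 \in V(\overline K)^{\Gal_K} = V(K)$ and the constant morphism $f \colon X \to \Spec K \xrightarrow{\,v_0\,} V$ has $f(\overline K) = \ph(\overline K)$.

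If instead $\ph_\ast$ is open, I would invoke Mochizuki's $\Hom$-theorem \cite[Theorem A]{MochizukiHom}: since $K$ is sub-$p$-adic, $X$ a smooth variety and $V$ a hyperbolic curve over $K$, it identifies $\Hom^{\mathrm{dom}}_K(X,V)$ with the set of open homomorphisms $\pi_1(X) \to \pi_1(V)$ over $\Gal_K$ taken up to $\pi_1(V_{\overline K})$-conjugacy. Let $f \colon X \to V$ be the dominant $K$-morphism matching the class of $\ph_\ast$, so $f_\ast$ is $\pi_1(V_{\overline K})$-conjugate to $\ph_\ast$. Both $f_\ast$ and $\ph_\ast$ are point-theoretic, and the induced map $X(\overline K) \to V(\overline K)$ of \cref{cor:pointtheoretic_actualmap} is unchanged under replacing a homomorphism by a $\pi_1(V_{\overline K})$-conjugate (a quasi-section and its conjugate define the same class in $\cS_{V/K}(L)$); hence $f_\ast(\overline K) = \ph_\ast(\overline K)$. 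Since $f_\ast(\overline K) = f(\overline K)$ by functoriality of the non-abelian Kummer map, and $\ph_\ast(\overline K) = \ph(\overline K)$ from above, I conclude $f(\overline K) = \ph(\overline K)$.

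The geometric substance — the curve dichotomy and the passage from an open $\pi_1$-homomorphism to an honest morphism — is already provided by \cref{prop:hyperbolicopen} and by Mochizuki's theorem, so what remains is assembly. The main obstacle I expect is the last point: verifying that the effect of a $\Gal_K$-homomorphism of fundamental groups on $\overline K$-points depends only on its outer class, so that the outer-homomorphism conclusion of the $\Hom$-theorem genuinely recovers $\ph(\overline K)$ (the other two gluing steps — upgrading openness along one connecting curve, and propagating constancy along chains of curves — are routine).
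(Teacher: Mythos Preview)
Your proof is correct and follows essentially the same route as the paper: the dichotomy from \cref{prop:hyperbolicopen} along connecting curves together with \cref{lem:hyperbolically copnnected} separates the constant case from the open case, and the latter is fed into Mochizuki's theorem. Your reductions differ only in minor implementation details---you pass to the field of constants rather than use Galois descent for the non-geometrically-connected case, and you argue $K$-rationality of the constant value via $\Gal_K$-equivariance of $\ph(\overline K)$ rather than via decomposition groups---both of which are valid (and arguably cleaner) alternatives.
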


\begin{proof}
By \cref{lemma:pinned_yields_pointtheoretic} the homomorphism 
\[
\ph_\ast: \pi_1(X,\bar x) \to \pi_1(\bP^1_K \setminus \{0,1,\infty\}, \tbp) 
\]
is point-theoretic, so that $\ph_\ast(\overline{K})$ is well defined by \cref{cor:pointtheoretic_actualmap} and agrees with $\ph(\overline{K})$ by \cref{prop:pinnedpoint}. Therefore we will search for a map $f$ with $f (\overline{K}) = \ph_\ast(\overline{K})$.

If $\ph_\ast$ is open, then the celebrated theorem of Mochizuki \cite[Theorem A]{MochizukiHom} shows that there is a map $f: X \to \bP^1_K \setminus \{0,1,\infty\}$ over $K$ such that $\ph_\ast$ equals $\pi_1(f)$ up to conjugation by an element of the geometric fundamental group of $\bP^1_K \setminus \{0,1,\infty\}$. In particular, then $f(\overline{K})$ equals $\ph_\ast(\overline{K})$, and the proof is complete in this case. 

We may now assume that $\ph_\ast$ is not open, and we are going to prove that $\ph(\overline{K})$ is constant.
We first show that $\ph(\overline{K})$ is constant on the image of a map $g\colon  C\to X$ from a geometrically connected smooth curve $C$ over $K$.  
The map $\ph \circ g_{\et} \colon C_{\et} \to (\bP^1_K \setminus \{0,1,\infty\})_{\et}$ is pinned as a composition of pinned maps. The induced map  
\[
(\ph \circ g_{\et})_\ast = \ph_\ast \circ \pi_1(g)  \colon \pi_1(C,\bar c) \to \pi_1(\bP^1_K \setminus \{0,1,\infty\}, \tbp) 
\]
is not open because its image is contained in the image of $\ph_\ast$. Hence, by  \cref{prop:hyperbolicopen} applied to $\ph \circ g_{\et}$, the map 
\[
\ph(\overline{K}) \circ g_{\et}(\overline{K}) = (\ph \circ g_{\et})(\overline{K}) \colon C(\overline{K}) \to  \bP^1_K \setminus \{0,1,\infty\}(\overline{K})
\]
is constant. If $X$ is geometrically connected, then \cref{lem:hyperbolically copnnected} shows that $\ph(\overline{K})$ is constant. If $X$ is not geometrically connected, then no such map $g\colon C \to X$ exists. 

Assume first that $X$ is geometrically connected.  By what we just proved then $\ph(\overline{K})$ is constant, lets say with image $a \in \big(\bP^1_K \setminus \{0,1,\infty\}\big)(\overline{K})$. Pick any of the maps $g\colon C \to X$ used in the previous paragraph. Then 
$\ph_\ast \circ \pi_1(g)$ factors over one of the decompositon groups $D_a$ in the conjugacy class of decomposition subgroups associated to the point $a$. As $C$ is geometrically connected over $K$ the composition 
\[
\pi_1(C,\bar c) \xrightarrow{\ph_\ast \circ \pi_1(g)} D_a \subset \pi_1(\bP^1_K \setminus \{0,1,\infty\}, \tbp) \xrightarrow{\pr_\ast} \Gal_K
\]
is surjective. This implies that $a \in X(K)$ is a $K$-rational point. Hence there is a map 
\[
f\colon X \longrightarrow \Spec(K) = \Spec(k(a)) \longrightarrow \bP^1_K \setminus \{0,1,\infty\},
\]
which agrees with $\ph(\overline{K})$ on $\overline{K}$-points. Thus, we have proven our statement when $X$ is geometrically connected. 

On the other hand, if $X$ is not geometrically connected, then by a base change to some finite Galois extension $L/K$ contained in $\overline{K}$, we can assume that all connected components of the base change $X_L$ have an $L$-rational point and therefore are geometrically connected over $L$. Hence, by performing the above argument to each connected component, we can construct a map 
\[
f_L \colon  X_L \longrightarrow \bP^1_L \setminus \{ 0,1,\infty \}
\]
such that  $\ph_L(\bar{K}) = f_L(\bar{K})$, where $\ph_L$ is the base change $(X_L)_{\et} \to (\bP^1_L \setminus \{0,1,\infty\})_{\et}$ of $\ph$. Now $\ph_L$ is invariant under $\Gal(L/K)$, so also $f_L(\overline{K})$ is Galois invariant and thus, moreover, the map $f_L$ is Galois invariant. By Galois descent, the map $f_L$ descends to 
a map $f  \colon  X \rightarrow \bP^1_K \setminus \{0,1,\infty\}$ such that $\ph(\bar{K}) = f(\bar{K})$.

This also completes the proof of  \cref{thm:etale_reconstruction} and thus the main result of the paper.
\end{proof}


\begin{bibdiv}
\begin{biblist}

\bib{CarlsonEtaleTopoi}{misc}{
      author={Carlson, Magnus},
      author={Haine, Peter~J.},
      author={Wolf, Sebastian},
       title={Reconstruction of schemes from their \'{e}tale topoi},
        date={2024},
         url={https://arxiv.org/abs/2407.19920},
}

\bib{CharlesBertini}{article}{
      author={Charles, Fran{\c{c}}ois},
      author={Poonen, Bjorn},
       title={Bertini irreducibility theorems over finite fields},
            date={2016},
        ISSN={0894-0347},
     journal={J. Am. Math. Soc.},
      volume={29},
      number={1},
       pages={81\ndash 94},
}

\bib{CreutzVolochBrauer}{article}{
      author={Creutz, Brendan},
      author={Voloch, Jos{\'e}~Felipe},
       title={The {Brauer}-{Manin} obstruction for constant curves over global
  function fields},
            date={2022},
        ISSN={0373-0956},
     journal={Ann. Inst. Fourier},
      volume={72},
      number={1},
       pages={43\ndash 58},
}

\bib{MR1483108}{incollection}{
      author={Grothendieck, Alexander},
       title={Brief an {G}. {F}altings},
        date={1997},
   booktitle={Geometric {G}alois actions, 1},
      series={London Math. Soc. Lecture Note Ser.},
      volume={242},
   publisher={Cambridge Univ. Press},
     address={Cambridge},
       pages={49\ndash 58},
        note={With an English translation on pp. 285--293},
      review={\MR{MR1483108 (99c:14023)}},
}

\bib{Hoshi:KummerFaithful}{article}{
      author={Hoshi, Yuichiro},
       title={On the {G}rothendieck conjecture for affine hyperbolic curves
  over {K}ummer-faithful fields},
        date={2017},
        ISSN={1340-6116,1883-2032},
     journal={Kyushu J. Math.},
      volume={71},
      number={1},
       pages={1\ndash 29},
         url={https://doi.org/10.2206/kyushujm.71.1},
      review={\MR{3676682}},
}

\bib{JohnstoneTopostheory}{book}{
      author={Johnstone, P.~T.},
       title={Topos theory},
          series={Lond. Math. Soc. Monogr.},
   publisher={Academic Press, London},
        date={1977},
      volume={10},
}

\bib{KatzLangFiniteness}{article}{
      author={Katz, Nicholas~M.},
      author={Lang, Serge},
       title={Finiteness theorems in geometric classfield theory. ({With} an
  appendix by {Kenneth} {A}. {Ribet})},
            date={1981},
        ISSN={0013-8584},
     journal={Enseign. Math. (2)},
      volume={27},
       pages={285\ndash 314, 315\ndash 319},
}

\bib{MochizukiGalois}{article}{
      author={Mochizuki, Shinichi},
       title={Galois sections in absolute anabelian geometry},
            date={2005},
        ISSN={0027-7630},
     journal={Nagoya Math. J.},
      volume={179},
       pages={17\ndash 45},
}

\bib{MR3445958}{article}{
      author={Mochizuki, Shinichi},
       title={Topics in absolute anabelian geometry {III}: global
  reconstruction algorithms},
        date={2015},
        ISSN={1340-5705},
     journal={J. Math. Sci. Univ. Tokyo},
      volume={22},
      number={4},
       pages={939\ndash 1156},
      review={\MR{3445958}},
}

\bib{MochizukiHom}{article}{
      author={Mochizuki, Shinichi},
       title={The local pro-p anabelian geometry of curves},
            date={1999},
        ISSN={0020-9910},
     journal={Invent. Math.},
      volume={138},
      number={2},
       pages={319\ndash 423},
}

\bib{OhtaniKummer}{article}{
      author={Ohtani, Sachiko},
       title={Kummer-faithful fields which are not sub-{{\(p\)}}-adic},
            date={2022},
        ISSN={2522-0160},
     journal={Res. Number Theory},
      volume={8},
      number={1},
       pages={7},
        note={Id/No 15},
}

\bib{MR4590273}{article}{
      author={Ohtani, Sachiko},
       title={Corrigendum to: {K}ummer-faithful fields which are not
  sub-{$p$}-adic},
        date={2023},
        ISSN={2522-0160},
     journal={Res. Number Theory},
      volume={9},
      number={2},
       pages={Paper No. 36, 7},
         url={https://doi-org/10.1007/s40993-023-00440-2},
      review={\MR{4590273}},
}

\bib{SerreClass}{book}{
      author={Serre, Jean-Pierre},
       title={Algebraic groups and class fields},
      series={Graduate Texts in Mathematics},
   publisher={Springer-Verlag, New York},
        date={1988},
      volume={117},
        ISBN={0-387-96648-X},
         url={https://doi.org/10.1007/978-1-4612-1035-1},
        note={Translated from the French},
      review={\MR{918564}},
}

\bib{StixRational}{book}{
      author={Stix, Jakob},
       title={Rational points and arithmetic of fundamental groups. {Evidence}
  for the section conjecture},
          series={Lect. Notes Math.},
   publisher={Berlin: Springer},
        date={2013},
      volume={2054},
        ISBN={978-3-642-30673-0},
}

\bib{MR1098621}{article}{
      author={Voevodskij, V.~A.},
       title={\'{E}tale topologies of schemes over fields of finite type over
  {${\bf Q}$}},
        date={1990},
        ISSN={0373-2436},
     journal={Izv. Akad. Nauk SSSR Ser. Mat.},
      volume={54},
      number={6},
       pages={1155\ndash 1167},
         url={https://doi.org/10.1070/IM1991v037n03ABEH002156},
      review={\MR{1098621}},
}

\end{biblist}
\end{bibdiv}

\end{document}